\def\@setcopyright{\@empty}
\newcommand{\E}{E_n(f)_{p,\alpha,\beta}}
\newcommand{\Epar}[2]{E_{#1}\left(#2\right)_{p,\alpha,\beta}}
\newcommand{\T}[2]{\tilde T_{#1}\left(#2\right)}
\newcommand{\sT}{S_t(f,x,\nu,\mu)}
\newcommand{\w}{\tilde\omega(f,\delta)_{p,\alpha,\beta}}
\newcommand{\wpar}[1]{\tilde\omega\left(#1\right)_{p,\alpha,\beta}}
\newcommand{\norm}[1]{\left\|#1\right\|_{p,\alpha,\beta}}
\newcommand{\normpar}[2]{\left\|#1\right\|_{#2}}
\newcommand{\Si}[1]{\left(1-#1^2\right)}
\newcommand{\Px}[3]{P_{#1}^{(#2)}(#3)}
\newcommand{\Lp}{L_{p,\alpha,\beta}}
\newcommand{\Lmu}{L_{1,2,2}}
\newcommand{\allp}{1\le p\le\infty}
\newcommand{\Dx}{D_{x,\nu,\mu}}
\newcommand{\AD}{AD(p,\alpha,\beta)}
\newcommand{\sincost}{%
  \left(\sin\frac t2\right)^{2\nu+1}
  \left(\cos\frac t2\right)^{2\mu+1}}
\newcommand{\krn}[1]{%
  \left(\frac{\sin\frac{m#1}2}{\sin\frac{#1}2}\right)^{2q+4}}
\newcommand{\prn}[1]{\left(#1\right)}
\newcommand{\brc}[1]{\left\{#1\right\}}
\newtheorem{thm}{Theorem}
\newtheorem{lmm}{Lemma}
\newtheorem{cor}{Corollary}
\newcounter{const}
\numberwithin{const}{thm}
\numberwithin{const}{lmm}
\numberwithin{const}{cor}
\newcommand{\Cn}[1][]{%
  \stepcounter{const}C_{\theconst}%
  \@ifnotempty{#1}{\newcounter{#1}\setcounter{#1}{\arabic{const}}}}
\newcommand{\refC}[1]{C_{\arabic{#1}}}
\newcommand{\lastC}{C_{\theconst}}
\newcommand{\prevC}[1][1]{%
	{\countdef\n=255
	 \n=\theconst
	 \advance\n by-#1
	 C_{\number\n}}}
\renewcommand{\theconst}{\arabic{const}}
\begin{document}

\title[Approximating by means of algebraic polynomials]
	{Approximating classes of functions defined by operators
	of differentiation or operators of generalised translation
	by means of algebraic polynomials}
\author{N.~Sh.\ Berisha}
\email{nimete-berisha@hotmail.com}
\author{F.~M.\ Berisha}
\address{F.~M.\ Berisha\\
	Faculty of Mathematics and Sciences\\
	University of Prishtina\\
	Mother Theresa av.~5\\
	10000 Prishtina\\
	Kosovo}
	\email{faton.berisha@uni-pr.edu}

\keywords{
	Generalised modulus of smoothness,
	asymmetric operator of generalised translation,
	coincidence of classes,
	best approximations by algebraic polynomials%
}
\subjclass{Primary 41A35, Secondary 41A50, 42A16.}
\date{}

\begin{abstract}
	In this paper,
	approximation by means of algebraic polynomials
	of classes of functions
	defined by a generalised modulus of smoothness
	of operators of differentiation of these functions
	is considered.
	We give structural characteristics
	of classes of functions defined
	by the order of best approximation by algebraic polynomials.
\end{abstract}

\maketitle

\section{Introduction}

In a number of papers (see e.g.\
\cite{pawelke:acta-72,butzer-s-w:c-80,potapov:trudy-74,
  potapov:trudy-75,potapov:trudy-81,p-berisha:east-98,
potapov:mat-zametki-01})
approximation of classes of functions defined by symmetric
or asymmetric operators of generalised translation by
means of algebraic polynomials is considered.

In our paper we consider the approximation of classes of
functions defined by generalised modulus of smoothness
of derivatives of these functions.
In more general terms,
we consider approximation by algebraic polynomials
of certain generalised Lipschitz classes of functions.

By $L_p[a,b]$ we denote the set of functions~$f$
such that for $1\le p<\infty$
$f$~is a measurable function on the segment $[a,b]$ and
\begin{displaymath}
	\normpar f p
	=\biggl(\int_a^b|f(x)|^p\,dx\biggr)^{1/p}<\infty,
\end{displaymath}
and for $p=\infty$ the function~$f$ is continuos
on the segment $[a,b]$ and
\begin{displaymath}
	\normpar f\infty=\max_{a\le x\le b}|f(x)|.
\end{displaymath}
In case that $[a,b]=[-1,1]$ we simply write~$L_p$
instead of $L_p[-1,1]$.

Denote by~$\Lp$ the set of functions~$f$ such that
$f(x)\*(1-x)^\alpha\*(1+x)^\beta\in L_p$, and put
\begin{displaymath}
	\norm f=\|f(x)(1-x)^\alpha(1+x)^\beta\|_p.
\end{displaymath}

By $\E$ we denote the best approximation of
the function $f\in\Lp$ by means of
algebraic polynomials of degree not greater
than $n-1$, in $\Lp$ metrics, i.e.
\begin{displaymath}
	\E=\inf_{P_n}\norm{f-P_n},
\end{displaymath}
where~$P_n$ is an algebraic polynomial of degree
not greater than $n-1$.

For a summable function~$f$ we define the asymmetric
operator of generalised translation
$\T t{f,x}$ by
\begin{multline*}
	\T t{f,x}=\frac1{\pi\Si x}\\
		\times\int_{-1}^1\prn{1-R^2-2\Si z\sin^2t
			+4\Si{x}\Si{z}^2\sin^2t}f(R)\frac{dz}{\sqrt{1-z^2}},
\end{multline*}
where $R=x\cos t-z\sqrt{1-x^2}\sin t$.

For a function $f\in\Lp$ we define by means of this
operator of generalised translation the
generalised modulus of smoothness by
\begin{displaymath}
	\w=\sup_{|t|\le\delta}\norm{\T t{f,x}-f(x)}.
\end{displaymath}

We say that~$\varphi$ is a function of modulus of
continuity type if
\begin{enumerate}
\item $\varphi$ is continuos and non-negative function
  on the interval $(-1,1]$,
\item $\varphi(t_1)\le C_{\varphi,1}\varphi(t_2)$
  $(0<t_1\le t_2\le1)$,
\item $\varphi(2t)\le C_{\varphi,2}\varphi(t)$
  $(0<t\le\frac12)$.
\end{enumerate}

We say that a function~$f(x)$ has the derivative
of order~$r$ inside of the interval $(-1,1)$
if the function~$f(x)$ has the absolutely continuos
derivative of order $r-1$ in every segment
$[a,b]\subset(-1,1)$. From the last condition
it follows that almost everywhere on the segment $[a,b]$
there exists the finite derivative of order~$r$, which
is a summable function on that interval.

Denote by $\Dx$ the following operator of differentiation
\begin{displaymath}
\Dx=\Si x\frac{d^2}{dx^2}
  +(\mu-\nu-(\nu+\mu+2)x)\frac d{dx},
\end{displaymath}
and put
\begin{align*}
\Dx^1f(x) &=\Dx f(x),\\
\Dx^r f(x)&=\Dx\prn{\Dx^{r-1}f(x)} \quad (r=1,2,\dotsc).
\end{align*}

We say that $f\in\AD$ if $f\in\Lp$, the function~$f$
has the derivative $\frac d{dx}f(x)$ absolutely continuos
on every segment $[a,b]\subset(-1,1)$ and $\Dx f(x)\in\Lp$.

By $\Px n{\nu,\mu}x$ $(n=0,1,\dotsc)$ we denote the
Jacobi's polynomials, i.e.\ algebraic polynomials of
order~$n$, orthogonal to each other with weight
$(1-x)^{\nu}(1+x)^{\mu}$ on the segment $[-1,1]$
and normed by the condition $\Px n{\nu,\mu}1=1$
$(n=0,1,\dotsc)$.

Let $\nu\ge\mu\ge-\frac12$. The following symmetric
operators of generalised translation
(see e.g.\ \cite{pawelke:acta-72,butzer-s-w:c-80,
  potapov:trudy-74, potapov:trudy-75,potapov:trudy-81}))
will have an auxiliary role below:
\begin{enumerate}
\item for $\nu=\mu=-\frac12$
\begin{displaymath}
\sT=\frac12(f(Q_{x,t,1,1})-f(Q_{x,-t,1,1}));
\end{displaymath}
\item for $\nu=\mu>-\frac12$
\begin{displaymath}
\sT=\frac1{\gamma(\nu)}
  \int_{-1}^1f(Q_{x,t,z,1})\Si{z}^{\nu-\frac12}\,dz;
\end{displaymath}
\item for $\nu>\mu=-\frac12$
\begin{displaymath}
\sT=\frac1{\gamma(\nu)}
  \int_{-1}^1f(Q_{x,t,1,z})\Si{z}^{\nu-\frac12}\,dz;
\end{displaymath}
\item for $\nu>\mu>-\frac12$
\begin{multline*}
\sT\\
=\frac1{\gamma(\nu,\mu)}
  \int_0^1\int_{-1}^1f(Q_{x,t,z,u})
	  \Si{z}^{\nu-\mu-1}z^{2\mu+1}
	  \Si{u}^{\mu-\frac12}\,du\,dz,
\end{multline*}
\end{enumerate}
where
\begin{align*}
Q_{x,t,z,u}
  & =x\cos t+zu\sqrt{1-x^2}\sin t
    -\Si u(1-x)\sin^2\frac t2,\\
\gamma(\nu)
  & =\int_{-1}^1\Si{z}^{\nu-\frac12}\,dz,\\
\gamma(\nu,\mu)
  & =\int_0^1\int_{-1}^1\Si{z}^{\nu-\mu-1}
	    z^{2\mu+1}\Si{u}^{\mu-\frac12}\,du\,dz.
\end{align*}

\section{Auxiliary statements}

We need the following lemmas in order to prove our results.

\begin{lmm}\label{lm:bernshtein-markov}
Let $P_n(x)$ be an algebraic polynomial of order
not greater than $n-1$, $\allp$, $\rho\ge0$, $\sigma\ge0$;
\begin{alignat*}3
\alpha &>-\frac1p, &\quad \beta &>-\frac1p
  &\quad &\text{for $1\le p<\infty$},\\
\alpha &\ge0,      &\quad \beta &\ge0
  &\quad &\text{for $p=\infty$}.
\end{alignat*}
The following inequalities hold true
\begin{gather*}
\normpar{P'_n(x)}{p,\alpha+\frac12,\beta+\frac12}
  \le\Cn n\norm{P_n},\\
\norm{P_n}
  \le\Cn n^{2\max\{\rho,\sigma\}}
    \normpar{P_n}{p,\alpha+\rho,\beta+\sigma},
\end{gather*}
where constants~$\prevC$ and~$\lastC$ do not depend on~$n$.
\end{lmm}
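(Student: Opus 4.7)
\medskip\noindent\textbf{Proof proposal.}
The plan is to reduce both inequalities to standard weighted estimates for trigonometric polynomials via the Chebyshev substitution $x=\cos\theta$. Setting $T_n(\theta):=P_n(\cos\theta)$ produces an even trigonometric polynomial of degree at most $n-1$, with $T_n'(\theta)=-\sin\theta\,P_n'(\cos\theta)$. Using the identities $1-x=2\sin^2(\theta/2)$, $1+x=2\cos^2(\theta/2)$, $dx=-\sin\theta\,d\theta$ and $\sin\theta=2\sin(\theta/2)\cos(\theta/2)$, a direct computation shows that both $\norm{P_n}^p$ and $\normpar{P_n'}{p,\alpha+1/2,\beta+1/2}^p$ transform, up to constants independent of $n$, into weighted trigonometric $L_p$-norms of $T_n$ and $T_n'$ respectively against the common Jacobi-type weight $(\sin(\theta/2))^{2p\alpha+1}(\cos(\theta/2))^{2p\beta+1}$ on $[0,\pi]$ (and analogously in the $p=\infty$ case).

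For the first inequality, this reduces the statement to the classical weighted Bernstein inequality for trigonometric polynomials with Jacobi-type weights, which is available in the Potapov references cited in the introduction. For the second inequality, I would use a Nikolskii-type argument comparing two weighted norms with different Jacobi exponents. Split $[-1,1]$ into the halves $[-1,0]$ and $[0,1]$, and within each further into an endpoint piece of length $\asymp n^{-2}$ and its complement. Away from the endpoint, only one of the factors $(1-x)^{-p\rho}$ or $(1+x)^{-p\sigma}$ is large, and that one is bounded by $n^{2p\rho}$ or $n^{2p\sigma}$ respectively, both dominated by $n^{2p\max\{\rho,\sigma\}}$, so a pointwise comparison of the weights already yields the required estimate on the complement. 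On an endpoint piece $[1-cn^{-2},1]$ (and symmetrically at $-1$) direct pointwise comparison fails, but one exploits that a polynomial of degree at most $n-1$ is essentially constant on such an interval, namely $|P_n(x)|$ is comparable to $|P_n(y)|$ for any two points $x,y$ there, to transfer the weighted integral to an adjacent interval of comparable length on which the weight ratio is again bounded by $n^{2p\rho}$.

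The main obstacle is controlling the endpoint part of the second inequality and ensuring that the final exponent is exactly $2\max\{\rho,\sigma\}$ rather than the naive $2(\rho+\sigma)$; this is precisely what the careful splitting into halves around $x=0$ avoids. Rather than redoing everything from scratch I expect to cite the standard weighted Jacobi--Nikolskii lemma available in the Potapov literature; alternatively, the local polynomial equivalence at the endpoints can be established by applying the first (Bernstein--Markov) inequality locally, combined with a dyadic decomposition $[1-2^{k+1}n^{-2},1-2^{k}n^{-2}]$ near each endpoint. Once these ingredients are in place, assembling the pieces to obtain the stated bound is routine.
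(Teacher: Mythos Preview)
The paper does not actually prove this lemma: immediately after the statement it simply records that the result is proved in~\cite{halilova:izv-74} and moves on. So there is no ``paper's own proof'' to compare against; your proposal goes well beyond what the paper does.

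As a sketch of an independent proof, your plan is the standard one and is essentially correct. The Chebyshev substitution cleanly converts the first inequality into a weighted Bernstein inequality for trigonometric polynomials with the Jacobi weight $(\sin(\theta/2))^{2p\alpha+1}(\cos(\theta/2))^{2p\beta+1}$, which is classical. For the second inequality, your observation that splitting at $x=0$ makes only one of $(1-x)^{-p\rho}$, $(1+x)^{-p\sigma}$ large on each half is exactly what produces the exponent $2\max\{\rho,\sigma\}$ rather than $2(\rho+\sigma)$. One small caveat: the pointwise assertion that ``$|P_n(x)|$ is comparable to $|P_n(y)|$ for any two points $x,y$'' in an interval of length $\asymp n^{-2}$ near an endpoint is not literally true, since $P_n$ may vanish there; what you actually need (and what the Markov-type argument or the dyadic decomposition you mention does give) is a comparison of \emph{norms} over adjacent intervals of comparable length, not of pointwise values. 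With that adjustment, the endpoint piece goes through and the assembly is routine, as you say.
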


Lemma~\ref{lm:bernshtein-markov} is proved
in~\cite{halilova:izv-74}.

\begin{lmm}\label{th:bound-T}
Let be given numbers~$p$, $\alpha$, $\beta$ and~$\gamma$
such that $\allp$, $\gamma=\min\{\alpha,\beta\}$;
\begin{alignat*}2
\gamma &>1-\frac1{2p} &\quad &\text{for $1\le p<\infty$},\\
\gamma &\ge1          &\quad &\text{for $p=\infty$}.
\end{alignat*}
Let~$\varepsilon$ be an arbitrary number from the interval
$0<\varepsilon<\frac12$ and let
\begin{displaymath}
\gamma_1=
\begin{cases}
\alpha-\beta & \text{if $\alpha>\beta$}\\
0            & \text{if $\alpha\le\beta$},
\end{cases}
\quad \gamma_2=
\begin{cases}
0            & \text{if $\alpha>\beta$}\\
\beta-\alpha & \text{if $\alpha\le\beta$};
\end{cases}
\end{displaymath}
for $1<p\le\infty$
\begin{displaymath}
\gamma_3=
\begin{cases}
\gamma-\frac32+\frac1{2p}+\varepsilon
  & \text{if $\gamma\ge\frac32-\frac1{2p}$}\\
0
  & \text{if $\gamma<\frac32-\frac1{2p}$},
\end{cases}
\end{displaymath}
for $p=1$
\begin{displaymath}
\gamma_3=
\begin{cases}
\gamma-1 & \text{if $\gamma\ge1$}\\
0        & \text{if $\gamma<1$}.
\end{cases}
\end{displaymath}
Then the following inequality holds true
\begin{multline*}
\norm{\T t{f,x}}
  \le C\Big(
      \norm f+t^{2(\gamma_1+\gamma_2)}
      \normpar f{p,\alpha-\gamma_1,\beta-\gamma_2}\\
+t^{2\gamma_3}\normpar f{p,\alpha-\gamma_3,\beta-\gamma_3}
	  +t^{2(\gamma_1+\gamma_2+\gamma_3)}
		  \normpar f{p,\alpha-\gamma_1-\gamma_3,\beta-\gamma_2-\gamma_3}
	\Big),
\end{multline*}
where constant~$C$ does not depend on~$f$ and~$t$.
\end{lmm}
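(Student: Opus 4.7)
The plan is to estimate $\norm{\T t{f,x}}$ directly from the integral definition, by dissecting the kernel and then comparing weights in $x$ and $R$.

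First I would address the apparent singularity $1/\Si x$ standing outside the integral. A direct check shows that at $x=\pm 1$ the $z$-average of the kernel $1-R^2-2\Si z\sin^2 t+4\Si x\Si z^2\sin^2 t$ against $dz/\sqrt{1-z^2}$ vanishes; at $x=1$, for instance, the kernel reduces to $\sin^2 t\,(2z^2-1)$ and $\int_{-1}^1(2z^2-1)\,dz/\sqrt{1-z^2}=0$. Extracting the resulting factor $\Si x$ from $1-R^2$ allows me to split $\T t{f,x}$ into a principal part plus corrections of order $\sin^2 t$ and $\sin^4 t$ coming from the other two summands in the kernel.

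Next I would apply Minkowski's integral inequality to move the weighted $L_p$-norm in $x$ inside the $z$-integration, reducing the estimate to bounds of the form $\normpar{f(R(x,z))(1-x)^\alpha(1+x)^\beta}{p}$ with $z$ held fixed. A change of variable $x\mapsto R$ at fixed $z$, combined with the elementary asymptotics
\[
1-R\asymp (1-x)\cos^2\tfrac t2+(1+x)(1-z)\sin^2\tfrac t2
\]
and its counterpart for $1+R$, lets me compare $(1-x)^\alpha(1+x)^\beta$ against $(1-R)^\alpha(1+R)^\beta$. When $\alpha>\beta$, mass from the $(1-R)^\alpha$ factor must be transferred to $(1+R)^\beta$ through the term $(1+x)\sin^2(t/2)$, at a cost of $t^{2(\alpha-\beta)}$; this is precisely the $t^{2(\gamma_1+\gamma_2)}$ contribution in the stated bound. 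The symmetric case $\alpha\le\beta$ is handled identically with the roles of the endpoints reversed.

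The main obstacle is integrability near the endpoints $z=\pm 1$, where the weight comparison can produce non-integrable singularities when $\gamma=\min\{\alpha,\beta\}$ is large. To overcome this I would apply H\"older's inequality in the $z$-variable with a small parameter $\varepsilon$, trading a bit of weight for improved integrability; this produces the factor $t^{2\gamma_3}$ paired with the heavier weight $(1-x)^{\alpha-\gamma_3}(1+x)^{\beta-\gamma_3}$. The threshold $\gamma=\frac32-\frac1{2p}$ emerges naturally from the requirement that $(1-z^2)^{-1/2}$ remain integrable after the weight transfer. The case $p=1$ requires separate treatment since H\"older's inequality is no longer available in its standard form, which explains the modified definition of $\gamma_3$ there. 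Summing the contributions of the kernel decomposition together with the three independent sources of weight loss then produces the four summands on the right-hand side of the inequality.
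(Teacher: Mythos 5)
First, a point of reference: the paper itself contains no proof of this lemma --- it is quoted verbatim from \cite{p-berisha:east-98} --- so there is no in-text argument to compare yours against; your sketch has to stand on its own. Your overall programme (decompose the kernel, apply the generalised Minkowski inequality in $z$, compare the weight at $x$ with the weight at $R$, and use a H\"older-type loss of $\varepsilon$ near $z=\pm1$) is the right genre of argument, and your check that the $z$-average of the kernel vanishes at $x=\pm1$ is correct. However, the one quantitative ingredient you actually write down is false, and it is the ingredient on which everything else rests. Writing $x=\cos\phi$, $a=\sin\tfrac\phi2\cos\tfrac t2$, $b=\cos\tfrac\phi2\sin\tfrac t2$, a direct computation from $R=x\cos t-z\sqrt{1-x^2}\sin t$ gives the exact identity
\[
1-R=2a^2+2b^2+4zab=2(a+zb)^2+2\left(1-z^2\right)b^2 ,
\]
whereas your claimed asymptotic amounts to $1-R\asymp 2a^2+2(1-z)b^2$. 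Neither direction of that comparison holds: at $z=-1$ the true quantity is $2(a-b)^2$, which vanishes along the interior curve $\phi=t$ (there $R=\cos(\phi-t)=1$), while your right-hand side stays strictly positive; at $z=1$, $x=1$ the true quantity is $2\sin^2\tfrac t2>0$ while your right-hand side is $0$. The formula you quote is the one adapted to the symmetric translation argument $Q_{x,t,z,u}$, not to the asymmetric $R$.

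This is not a reparable detail. The whole point of the lemma --- which powers of $t$ appear, why the weights degrade exactly to $\alpha-\gamma_1$, $\beta-\gamma_2$, $\alpha-\gamma_3$, $\beta-\gamma_3$, and why the thresholds $\gamma\ge\tfrac32-\tfrac1{2p}$ (resp.\ $\gamma\ge1$ for $p=1$) arise --- is governed by precisely how $1\mp R$ degenerates jointly in $(x,z,t)$, and in particular by the fact that $1-R$ can vanish at \emph{interior} points $x$ when $z$ is near $-1$; with the incorrect comparison the bookkeeping that is supposed to produce the four stated terms cannot even be started. Beyond this, the remaining steps are asserted rather than performed: the substitution $x\mapsto R$ at fixed $z$ needs control of $\partial R/\partial x=\cos t+zx\sin t/\sqrt{1-x^2}$, which blows up at $x=\pm1$ and changes sign in the interior, so the ``change of variable'' is itself a nontrivial estimate; and the separate treatment of $p=1$ and the exact placement of $\varepsilon$ are only gestured at. As it stands the proposal is a plausible plan built on a false central inequality, not a proof.
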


Lemma~\ref{th:bound-T} is proved in~\cite{p-berisha:east-98}.

\begin{lmm}\label{th:T-Q}
Let be given positive integers~$q$ and~$m$ and let $f\in\Lmu$.
The function
\begin{displaymath}
Q(x)=\frac1{\gamma_m}\int_0^\pi\T t{f,x}\krn t\sin^3t\,dt,
\end{displaymath}
where
\begin{displaymath}
\gamma_m=\int_0^\pi\krn t\sin^3t\,dt,
\end{displaymath}
is an algebraic polynomial of degree not greater than
$(q+2)\*(m-1)$.
\end{lmm}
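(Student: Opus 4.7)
The first observation is that $K_m(t):=\krn{t}$ is an algebraic polynomial in $\cos t$ of degree $(q+2)(m-1)$. Indeed, the identity
\[
\biggl(\frac{\sin(mt/2)}{\sin(t/2)}\biggr)^{2}=\frac{1-\cos mt}{1-\cos t}
\]
exhibits the squared quotient as a polynomial in $\cos t$ of degree $m-1$ (by the Chebyshev recursion $\cos(kt)=T_{k}(\cos t)$ and induction on $m$); raising to the $(q+2)$-th power yields a polynomial in $\cos t$ of degree $(q+2)(m-1)$.

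My plan is then to reduce the claim to a representation $Q(x)=\int_{-1}^{1}f(v)\Phi(x,v)\,dv$ with $\Phi(x,\cdot)$ an explicit algebraic polynomial in $x$ of degree at most $(q+2)(m-1)$. I would substitute the definition of $\T{t}{f,x}$ into $Q(x)$, apply Fubini to swap the $t$- and $z$-integrations, and change variable $u=\cos t$; this converts $K_m(t)$ into a genuine polynomial $\tilde K(u)$ of degree $(q+2)(m-1)$ and $\sin^{3}t\,dt$ into $(1-u^{2})\,du$. A further substitution $v=R=xu-z\sqrt{1-x^{2}}\sqrt{1-u^{2}}$ for fixed $u$, with
\[
\frac{dz}{\sqrt{1-z^{2}}}=\frac{dv}{\sqrt{(1-x^{2})(1-u^{2})-(v-xu)^{2}}},
\]
followed by interchange of the $u$- and $v$-integrations (justified because $f\in\Lmu$), produces the desired representation with $\Phi(x,v)$ equal to an explicit $u$-integral of $\tilde K(u)(1-u^{2})$ against the transformed bracket and the Jacobian.

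The principal obstacle is then to verify that $\Phi(x,v)$ is an algebraic polynomial in $x$ of degree at most $(q+2)(m-1)$. The apparent $(1-x^{2})^{-1}$ singularity from the definition of $\T{t}{f,x}$ and the $\sqrt{1-x^{2}}$-factors in the bracket and the Jacobian must cancel after the $u$-integration; this cancellation is forced by the addition-formula identity $R=xu-z\sqrt{1-x^{2}}\sqrt{1-u^{2}}$ underlying the Potapov operator, and is the algebraic property responsible for the polynomial-preservation behaviour of $\T{t}{\cdot,x}$. The degree bound $(q+2)(m-1)$ then follows from careful book-keeping of $x$-degrees: $\tilde K(u)$ is the only factor in $\Phi$ capable of contributing $x$-powers beyond a bounded degree, and it does so with degree exactly matching $(q+2)(m-1)$ after the substitution.
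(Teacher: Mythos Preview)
The paper does not actually prove this lemma: immediately after the statement it says ``Lemma~\ref{th:T-Q} is also proved in~\cite{p-berisha:east-98}'' and moves on. So there is no in-paper argument to compare your proposal to.

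As for the proposal itself, your first paragraph is correct and useful: $K_m(t)$ is indeed a polynomial in $\cos t$ of degree $(q+2)(m-1)$, and the weight $\sin^3 t\,dt$ becomes $(1-u^2)\,du$ under $u=\cos t$, which is exactly the Jacobi weight with parameters $(1,1)$. But the remainder of the outline does not close the gap you yourself identify. After your substitutions the inner $u$-integral defining $\Phi(x,v)$ runs over the $x$-dependent interval $\bigl[xv-\sqrt{(1-x^2)(1-v^2)},\,xv+\sqrt{(1-x^2)(1-v^2)}\bigr]$, with an integrand still carrying the factor $(1-x^2)^{-1}$ and the square-root Jacobian. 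Saying the cancellation ``is forced by the addition-formula identity'' is a restatement of what has to be proved, not a proof; and the final degree-count is not right as stated, since $\tilde K(u)$ is a polynomial in $u$, not in $x$, and the $x$-degree of $\Phi$ arises from the interaction of the integration limits, the bracket, and the Jacobian, not from $\tilde K$ alone.

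The route that actually works (and is almost certainly what is in the cited reference) avoids the kernel inversion entirely. One first establishes the eigenfunction property of the operator $\tilde T_t$ on Jacobi polynomials, namely that $\tilde T_t\bigl(P_k^{(1,1)},x\bigr)=P_k^{(1,1)}(\cos t)\,P_k^{(1,1)}(x)$ (this is the content behind the ``addition formula'' you allude to, and it is where the $(1-x^2)^{-1}$ and the bracket are designed to cancel). One then expands the polynomial $\tilde K(u)$ in the basis $\{P_k^{(1,1)}(u)\}_{k\le(q+2)(m-1)}$, so that, using $f\in L_{1,2,2}$ and Fubini,
\[
Q(x)=\frac{1}{\gamma_m}\sum_{k=0}^{(q+2)(m-1)} c_k\int_0^\pi \tilde T_t(f,x)\,P_k^{(1,1)}(\cos t)\,\sin^3 t\,dt
     =\sum_{k=0}^{(q+2)(m-1)} d_k\,\hat f(k)\,P_k^{(1,1)}(x),
\]
with $\hat f(k)$ the Fourier--Jacobi coefficients of $f$. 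The degree bound then falls out immediately. If you want to salvage your direct approach, the missing ingredient is precisely a proof of the eigenfunction identity above (or equivalently that $\tilde T_t(x^n,x)$ is a polynomial in $x$ of degree $n$ with coefficients that are polynomials in $\cos t$ of degree at most $n$); once you have that, the Jackson-kernel argument is routine.
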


Lemma~\ref{th:T-Q} is also proved in~\cite{p-berisha:east-98}

\begin{lmm}\label{lm:rho-sigma}
Let $f\in\Lp$ and let be given numbers~$p$, $\alpha$, $\beta$,
$\rho$ and~$\sigma$ such that $\allp$, $\rho\ge0$, $\sigma\ge0$;
\begin{alignat*}3
\alpha &>-\frac1p, &\quad \beta &>-\frac1p
  &\quad &\text{for $1\le p<\infty$},\\
\alpha &\ge0,      &\quad \beta &\ge0
  &\quad &\text{for $p=\infty$}.
\end{alignat*}
Let~$\varphi$ be a function of modulus of continuity type
such that
\begin{equation}\label{eq:phi-3}
\sum_{j=n+1}^\infty j^{2\lambda_0-1}\varphi\prn{\frac1j}
\le C_{\varphi,3}n^{2\lambda_0}\varphi\prn{\frac1n},
\end{equation}
where $\lambda_0=\max\{\rho,\sigma\}$ and
constant~$C_{\varphi,3}$ does not depend on~$n$.
If there exists a sequence of algebraic polynomials $P_n(x)$
of degree not greater than $n-1$ $(n=0,1,\dotsc)$
such that
\begin{displaymath}
\normpar{f-P_n}{p,\alpha+\rho,\beta+\sigma}
\le\Cn\varphi\prn{\frac1n},
\end{displaymath}
then there exists a sequence of algebraic polynomials $R_n(x)$
of degree not greater than $n-1$ $(n=0,1,\dotsc)$ such that
\begin{displaymath}
\norm{f-R_n}\le\Cn n^{2\lambda_0}\varphi\prn{\frac1n},
\end{displaymath}
where constants~$\prevC$ and~$\lastC$ do not depend on~$f$
and~$n$. Also we have
\begin{displaymath}
R_{2^N}(x)=P_{2^N}(x).
\end{displaymath}
\end{lmm}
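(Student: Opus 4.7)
The plan is a dyadic-blocking telescoping argument. I would set $R_n = P_{2^N}$ whenever $2^N \leq n < 2^{N+1}$; this has degree at most $2^N - 1 \leq n - 1$ and automatically satisfies $R_{2^N} = P_{2^N}$. I would then write $f - R_n = \sum_{k \geq N}(P_{2^{k+1}} - P_{2^k})$ in $\Lp$ and reduce the problem to bounding each dyadic block in $\norm{\cdot}$ via the inverse Bernstein--Markov inequality (second bound of Lemma~\ref{lm:bernshtein-markov}) and summing using hypothesis~\eqref{eq:phi-3}.

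For each block I would apply Lemma~\ref{lm:bernshtein-markov} to the polynomial $P_{2^{k+1}} - P_{2^k}$ of degree at most $2^{k+1} - 1$, obtaining
\[
  \norm{P_{2^{k+1}} - P_{2^k}} \leq C\, 2^{2(k+1)\lambda_0}\, \normpar{P_{2^{k+1}} - P_{2^k}}{p, \alpha+\rho, \beta+\sigma},
\]
and then use the triangle inequality, the hypothesis $\normpar{f - P_n}{p, \alpha+\rho, \beta+\sigma} \leq C\varphi(1/n)$, and the quasi-monotonicity property~(2) of $\varphi$ to bound the right-hand norm by $C'\varphi(1/2^k)$. Thus each block is $\leq C''\, 2^{2k\lambda_0}\varphi(1/2^k)$ in $\norm{\cdot}$. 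Using properties (2) and (3) of $\varphi$ one has $\varphi(1/j) \asymp \varphi(1/2^k)$ uniformly for $j \in [2^k, 2^{k+1})$, hence $\sum_{k \geq N} 2^{2k\lambda_0}\varphi(1/2^k) \asymp \sum_{j > 2^N} j^{2\lambda_0-1}\varphi(1/j)$, which by~\eqref{eq:phi-3} is bounded by $C(2^N)^{2\lambda_0}\varphi(1/2^N) \leq C'\, n^{2\lambda_0}\varphi(1/n)$, using $2^N \leq n < 2^{N+1}$ and the doubling/monotonicity of $\varphi$ once more.

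To close, I must identify $f$ with $\sum_k(P_{2^{k+1}} - P_{2^k})$ in $\Lp$. The block estimate shows the series converges absolutely in $\Lp$, so $\{P_{2^N}\}$ is $\Lp$-Cauchy with some limit $g \in \Lp$. Since $(1-x)^{p\rho}(1+x)^{p\sigma}$ is bounded on $[-1,1]$, $\Lp$-convergence implies convergence in $\normpar{\cdot}{p, \alpha+\rho, \beta+\sigma}$; together with the hypothesis $\normpar{f - P_{2^N}}{p, \alpha+\rho, \beta+\sigma} \leq C\varphi(1/2^N) \to 0$ (which, for $\lambda_0 > 0$, follows from convergence of $\sum_j j^{2\lambda_0-1}\varphi(1/j)$ and the monotonicity of $\varphi$; the case $\lambda_0 = 0$ is trivial) this forces $g = f$. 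The main technical point is the dyadic-to-integral comparison used to invoke~\eqref{eq:phi-3}: both signs of $2\lambda_0 - 1$ occur, but in each case $\varphi(1/j) \asymp \varphi(1/2^k)$ on dyadic intervals and $j^{2\lambda_0-1} \asymp 2^{k(2\lambda_0-1)}$, so the comparison collapses to a trivial count; the rest is routine telescoping.
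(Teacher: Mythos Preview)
Your proposal is correct and follows essentially the same route as the paper: dyadic blocks $P_{2^{k+1}}-P_{2^k}$, the second inequality of Lemma~\ref{lm:bernshtein-markov} to pass from the $(\alpha+\rho,\beta+\sigma)$-norm to the $(\alpha,\beta)$-norm, the dyadic-to-integer comparison to invoke~\eqref{eq:phi-3}, and $R_n:=P_{2^N}$ on dyadic intervals. The only cosmetic differences are the indexing convention (the paper takes $2^{N-1}<n\le 2^N$) and the identification step, where the paper argues $P_{2^N}\to f$ in $L_p[a,b]$ on compact subintervals of $(-1,1)$ rather than your direct $\Lp$-Cauchy argument; both are valid.
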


\begin{proof}
We consider the sequence of algebraic polynomials $Q_n(x)$
of degree not greater than $2^n-1$ given by
\begin{displaymath}
Q_k(x)=P_{2^k}(x)-P_{2^{k-1}}(x) \quad (k=1,2,\dotsc)
\end{displaymath}
and $Q_0(x)=P_1(x)$. From the conditions of the lemma
it follows that
\begin{multline*}
\normpar{Q_k}{p,\alpha+\rho,\beta+\sigma}
\le\normpar{P_{2^k}-f}{p,\alpha+\rho,\beta+\sigma}
	+\normpar{f-P_{2^{k-1}}}{p,\alpha+\rho,\beta+\sigma}\\
\le\Cn\prn{\varphi\prn{\frac1{2^k}}
    +\varphi\prn{\frac1{2^{k-1}}}}.
\end{multline*}
Considering the properties of the function~$\varphi$
we get
\begin{displaymath}
\normpar{Q_k}{p,\alpha+\rho,\beta+\sigma}
\le\Cn\varphi\prn{\frac1{2^k}}.
\end{displaymath}
Applying Lemma~\ref{lm:bernshtein-markov} and that
evaluate we obtain
\begin{displaymath}
\norm{Q_k}
\le\Cn[cn:normQk]2^{2k\lambda_0}
  \varphi\prn{\frac1{2^k}}.
\end{displaymath}
There from
\begin{displaymath}
\sum_{k=0}^\infty\norm{Q_k}
\le\lastC\sum_{k=0}^\infty 2^{2k\lambda_0}
  \varphi\prn{\frac1{2^k}}.
\end{displaymath}
Note that considering the properties of the
function~$\varphi$ we have
\begin{multline*}
\sum_{j=2^k}^{2^{k+1}-1}j^{2\lambda_0-1}
      \varphi\prn{\frac1j}
  \ge C_{\varphi,1}^{-1}C_{\varphi,2}^{-1}
    \varphi\prn{\frac1{2^k}}
	  \sum_{j=2^k}^{2^{k+1}-1}j^{2\lambda_0-1}\\
\ge\Cn\varphi\prn{\frac1{2^k}}2^k2^{k(\lambda_0-1)}
  =\lastC2^{2k\lambda_0}\varphi\prn{\frac1{2^k}}.
\end{multline*}
So, we get
\begin{displaymath}
\sum_{k=0}^\infty\norm{Q_k}
\le\Cn\sum_{k=0}^\infty\sum_{j=2^k}^{2^{k+1}-1}
    j^{2\lambda_0-1}\varphi\prn{\frac1j}
=\lastC\sum_{k=0}^\infty k^{2\lambda_0-1}
    \varphi\prn{\frac1k}.
\end{displaymath}
Thus, inequality~\eqref{eq:phi-3} yields
\begin{displaymath}
\sum_{k=0}^\infty\norm{Q_k}<\infty.
\end{displaymath}
Hence, considering the conditions of the lemma
it follows that the series $\sum_{k=0}^\infty Q_k(x)$
converge to $f(x)$ in terms of $L_p[a,b]$ for every
segment $[a,b]\subset(-1,1)$.

Now we consider the expression
\begin{displaymath}
I=\norm{f-P_{2^N}}.
\end{displaymath}
From what we said above it follows that
\begin{multline*}
I\le\sum_{k=N+1}^\infty\norm{Q_k}
  \le\refC{cn:normQk}\sum_{k=N+1}^\infty
    2^{2k\lambda_0}\varphi\prn{\frac1{2^k}}\\
\le\Cn\sum_{k=N+1}^\infty\sum_{j=2^k}^{2^{k+1}-1}
    j^{2\lambda_0-1}\varphi\prn{\frac1j}
  =\lastC\sum_{k=2^{N+1}}^\infty
    k^{2\lambda_0-1}\varphi\prn{\frac1k}.
\end{multline*}
Considering the inequality~\eqref{eq:phi-3}
and the properties of the function~$\varphi$ we
obtain that
\begin{displaymath}
I\le\Cn2^{2(N+1)\lambda_0}\varphi\prn{\frac1{2^{N+1}}}
\le\Cn2^{2N\lambda_0}\varphi\prn{\frac1{2^N}},
\end{displaymath}
where constant~$\lastC$ does not depend on~$f$ and~$N$.

Put
\begin{displaymath}
R_n(x)=P_{2^N}(x) \quad (2^{N-1}<n\le2^N),
\end{displaymath}
we get
\begin{displaymath}
\norm{f-R_n}
\le\lastC2^{2N\lambda_0}\varphi\prn{\frac1{2^N}}
\le\Cn n^{2\lambda_0}\varphi\prn{\frac1n}.
\end{displaymath}

Lemma~\ref{lm:rho-sigma} is proved.
\end{proof}

\begin{lmm}\label{lm:E-D}
Let be given numbers~$p$, $\alpha$, $\beta$, $\nu$,
and~$\mu$ such that $\allp$, $\nu\ge\mu\ge-\frac12$;
\begin{enumerate}
\item if $\nu=\mu=-\frac12$, then $\alpha=\beta=-\frac1{2p}$;
\item if $\nu=\mu>-\frac12$, then $\alpha=\beta$, and
\begin{alignat*}2
-\frac12    &<\alpha\le\nu
  &\quad &\text{for $p=1$},\\
-\frac1{2p} &<\alpha<\nu+\frac12-\frac1{2p}
  &\quad &\text{for $1<p<\infty$},\\
0           &\le\alpha<\nu+\frac12
  &\quad &\text{for $p=\infty$};
\end{alignat*}
\item if $\nu>\mu=-\frac12$, then $\beta=-\frac1{2p}$, and
\begin{alignat*}2
-\frac12    &<\alpha\le\nu
  &\quad &\text{for $p=1$},\\
-\frac1{2p} &<\alpha<\nu+\frac12-\frac1{2p}
  &\quad &\text{for $1<p<\infty$},\\
0           &\le\alpha<\nu+\frac12
  &\quad &\text{for $p=\infty$};
\end{alignat*}
\item if $\nu>\mu>-\frac12$, then $\nu-\mu>\alpha-\beta\ge0$,
and
\begin{alignat*}2
-\frac12    &<\beta\le\mu
  &\quad &\text{for $p=1$},\\
-\frac1{2p} &<\beta<\mu+\frac12-\frac1{2p}
  &\quad &\text{for $1<p<\infty$},\\
0           &\le\beta<\mu+\frac12
  &\quad &\text{for $p=\infty$}.
\end{alignat*}
\end{enumerate}
For $f(x)\in\AD$ the following inequality holds true
\begin{displaymath}
\E\le \frac{C}{n^2}\norm{\Dx f(x)},
\end{displaymath}
where constant~$C$ does not depend on~$f$ and~$n$.
\end{lmm}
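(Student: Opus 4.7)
The plan is to construct an explicit polynomial approximant of $f$ by convolving the asymmetric translation $\T{t}{f,\cdot}$ against the Fej\'er-type kernel from Lemma~\ref{th:T-Q}, and then to exploit the fact that $\Dx$ is the second-order infinitesimal generator of $\T{t}{\cdot}$ at $t=0$. Fix an integer $q$ large enough that all integrals that appear are absolutely convergent, and choose a positive integer $m$ with $(q+2)(m-1)\le n-1$ and $m\asymp n$. Define
\[
Q(x)=\frac{1}{\gamma_m}\int_0^\pi\T{t}{f,x}\krn{t}\sin^3 t\,dt;
\]
by Lemma~\ref{th:T-Q}, $Q$ is an algebraic polynomial of degree not greater than $n-1$. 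Since the kernel has total mass $\gamma_m$,
\[
f(x)-Q(x)=\frac{1}{\gamma_m}\int_0^\pi\bigl(f(x)-\T{t}{f,x}\bigr)\krn{t}\sin^3 t\,dt,
\]
and Minkowski's integral inequality yields
\[
\E\le\norm{f-Q}\le\frac{1}{\gamma_m}\int_0^\pi\norm{\T{t}{f,x}-f(x)}\krn{t}\sin^3 t\,dt.
\]

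The crux of the proof is the pointwise-in-$t$ inequality
\[
\norm{\T{t}{f,x}-f(x)}\le C\,t^2\,\norm{\Dx f(x)}\qquad (0\le t\le\pi)
\]
for every $f\in\AD$ in the parameter configurations (1)--(4). This is the analogue of the trivial Taylor bound $\|f(\cdot+t)-f(\cdot)\|\le t^2\|f''\|$ in the ordinary periodic case, and it expresses the fact that $\Dx$ is the second-order infinitesimal generator of $\T{t}{\cdot}$. I would prove it by writing $\T{t}{f,x}-f(x)=\int_0^t\partial_s\T{s}{f,x}\,ds$ and integrating once more, identifying the quadratic coefficient in $t$ with $\Dx f$---first on the dense span of the Jacobi polynomials $\Px n{\nu,\mu}x$, which are simultaneous eigenfunctions of $\T{t}{\cdot}$ and $\Dx$, and then extending by density---while the weighted-norm remainders are controlled by Lemma~\ref{th:bound-T} applied to $\Dx f$. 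The parameter restrictions (1)--(4) of the present lemma are tailored precisely so that the hypotheses of Lemma~\ref{th:bound-T} are satisfied and the auxiliary $t^{2\gamma_i}$ terms it produces can be absorbed into $Ct^2\norm{\Dx f}$.

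Substituting into the bound for $\norm{f-Q}$ gives
\[
\E\le\frac{C\,\norm{\Dx f}}{\gamma_m}\int_0^\pi t^2\krn{t}\sin^3 t\,dt.
\]
A routine Fej\'er-kernel computation, splitting the integrals at $t\asymp 1/m$, shows that $\gamma_m\ge C_1 m^{2q+1}$ and $\int_0^\pi t^2\krn{t}\sin^3 t\,dt\le C_2 m^{2q-1}$ provided $q$ was chosen large enough; hence the ratio is $O(1/m^2)=O(1/n^2)$, and the desired estimate $\E\le C n^{-2}\norm{\Dx f}$ follows.

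The main obstacle is clearly the infinitesimal-generator estimate above. It is delicate because the kernel defining $\T{t}{f,x}$ carries the factor $1/\Si{x}$, singular at $x=\pm1$; the weighted norm $\norm{\cdot}$ is what keeps the bound finite, and the case distinction (1)--(4) reflects precisely the ranges of $\alpha,\beta,\nu,\mu$ for which this mechanism works through Lemma~\ref{th:bound-T}.
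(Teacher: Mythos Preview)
Your overall architecture---build a Jackson-type polynomial by smoothing a translation operator, apply Minkowski, invoke a $t^2$ generator estimate, and finish with the standard kernel moment bound---is exactly the route the paper takes. The gap is in \emph{which} translation operator you feed into the machine.

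The paper does \emph{not} use the asymmetric operator $\T{t}{f,x}$ here. It uses the \emph{symmetric} operator $\sT$, which carries the Jacobi parameters $\nu,\mu$ in its very definition, and builds
\[
Q(x)=\frac1{\gamma_m}\int_0^\pi\sT\krn t\sincost\,dt
\]
(with the weight $\sincost$, not $\sin^3 t$); that this is a polynomial is quoted from Potapov's earlier papers. The crucial estimate
\[
\norm{\sT-f(x)}\le C\,t^2\norm{\Dx f(x)}
\]
is then cited as a known fact for the \emph{symmetric} operator, and everything follows.

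Your version cannot work as written. The asymmetric $\T{t}{\cdot}$ is a single fixed operator with no $\nu,\mu$ in it, so it cannot have $\Dx$ as its infinitesimal generator for \emph{every} $(\nu,\mu)$ with $\nu\ge\mu\ge-\tfrac12$; in particular $\Px n{\nu,\mu}x$ are not eigenfunctions of $\T{t}{\cdot}$ for general $(\nu,\mu)$, so the density argument you sketch has no starting point. Concretely, Lemma~\ref{th:T-Q} requires $f\in L_{1,2,2}$ and Lemma~\ref{th:bound-T} requires $\min\{\alpha,\beta\}>1-\tfrac1{2p}$, whereas Lemma~\ref{lm:E-D} allows (case~3, say) $\beta=-\tfrac1{2p}$ and $\alpha$ close to $-\tfrac1{2p}$; neither auxiliary lemma is available in that regime, so the ``parameter restrictions (1)--(4) are tailored precisely so that the hypotheses of Lemma~\ref{th:bound-T} are satisfied'' claim is simply false. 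Replace $\T{t}{\cdot}$ by $\sT$ throughout and quote the known $t^2$ bound for it; then your outline becomes the paper's proof.
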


\begin{proof}
We choose the positive integer~$q$ such that $q>\nu$.
For every positive integer~$n$
we choose the positive integer~$m$ such that
\begin{displaymath}
\frac{n-1}{q+2}<m\le\frac{n-1}{q+2}+1.
\end{displaymath}
In~\cite{potapov:trudy-74} and~\cite{potapov:trudy-75}
it is proved that the function
\begin{displaymath}
Q(x)=\frac1{\gamma_m}\int_0^\pi\sT\krn t\sincost\,dt,
\end{displaymath}
where
\begin{displaymath}
\gamma_m=\int_0^\pi\krn t\sincost\,dt,
\end{displaymath}
is an algebraic polynomial of degree not greater than $n-1$.
Applying the generalised Minkowski's inequality we get
\begin{multline*}
\E\le\norm{f-Q}
\le\frac1{\gamma_m}\int_0^\pi\norm{\sT-f(x)}\\
	\times\krn t\sincost\,dt.
\end{multline*}
In~\cite[p.~47]{potapov:vestnik-83} it is proved that
under the conditions of the lemma we have
\begin{displaymath}
\norm{\sT-f(x)}\le\Cn t^2\norm{\Dx f(x)},
\end{displaymath}
where constant~$\lastC$ does not depend on~$f$ and~$t$.
Hence we get
\begin{multline*}
\E\le\lastC\norm{\Dx f(x)}\\
\times\frac1{\gamma_m}\int_0^\pi t^2\krn t\sincost\,dt.
\end{multline*}
Applying a standard estimate of Jackson's
kernel \cite[p.~233--235]{nikolskii:priblizhenie}
we obtain
\begin{displaymath}
\E\le\frac{\Cn}{m^2}\norm{\Dx f(x)}
\le\frac{\Cn}{n^2}\norm{\Dx f(x)}.
\end{displaymath}

Lemma~\ref{lm:E-D} is proved.
\end{proof}

\begin{cor}\label{cr:E-D}
Let numbers~$p$, $\alpha$, $\beta$, $\nu$, and~$\mu$
satisfy the conditions of Lemma~\ref{lm:E-D}.
For $f(x)\in\AD$ the following inequality holds true
\begin{displaymath}
\E\le\frac C{n^2}\Epar n{\Dx f},
\end{displaymath}
where constant~$\lastC$ does not depend on~$f$ and~$n$.
\end{cor}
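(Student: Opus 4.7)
The plan is to deduce the corollary from Lemma~\ref{lm:E-D} by subtracting from $f$ an auxiliary polynomial $h_n$ of degree not greater than $n-1$ chosen so that $\Dx h_n$ realises (up to a constant, which turns out to be negligible) the best polynomial approximation of $\Dx f$ of degree not greater than $n-1$. Since subtracting a polynomial of degree not greater than $n-1$ does not change the value of $\E$, we have $\E=\Epar n{f-h_n}$, and $f-h_n\in\AD$ because $h_n$ is a polynomial. Lemma~\ref{lm:E-D} applied to $f-h_n$ then yields
\begin{displaymath}
\E\le\frac{C}{n^2}\norm{\Dx f-\Dx h_n},
\end{displaymath}
so the corollary reduces to showing that the right-hand side is bounded by a constant multiple of $\Epar n{\Dx f}$.

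To construct $h_n$, I would take a polynomial $P_n^*$ of degree not greater than $n-1$ realising $\Epar n{\Dx f}$ and expand it in the Jacobi basis as $P_n^*=\sum_{k=0}^{n-1}a_k\Px k{\nu,\mu}x$. Using the eigenrelation $\Dx\Px k{\nu,\mu}x=-k(k+\nu+\mu+1)\Px k{\nu,\mu}x$, which makes $\Dx$ invertible on the span of the $\Px k{\nu,\mu}x$ with $k\ge 1$, I would put
\begin{displaymath}
h_n(x)=-\sum_{k=1}^{n-1}\frac{a_k}{k(k+\nu+\mu+1)}\Px k{\nu,\mu}x,
\end{displaymath}
which is a polynomial of degree not greater than $n-1$ with $\Dx h_n=P_n^*-a_0$. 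This immediately gives
\begin{displaymath}
\norm{\Dx f-\Dx h_n}\le\Epar n{\Dx f}+|a_0|\cdot\norm{1}.
\end{displaymath}

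The remaining step, and the main obstacle, is to bound $|a_0|$ by a constant multiple of $\Epar n{\Dx f}$. Here the key observation is the Sturm--Liouville form $\Dx f=\frac{1}{w(x)}\frac{d}{dx}\prn{\Si{x}w(x)f'(x)}$, with $w(x)=(1-x)^\nu(1+x)^\mu$, from which integration by parts produces $\int_{-1}^1\Dx f(x)\,w(x)\,dx=0$ under the endpoint behaviour implied by $f\in\AD$ together with the parameter restrictions of Lemma~\ref{lm:E-D}. Since $\Px 0{\nu,\mu}x=1$, this orthogonality combined with Jacobi orthogonality gives $a_0\int_{-1}^1 w\,dx=\int_{-1}^1(P_n^*-\Dx f)\,w\,dx$, and H\"older's inequality applied with the weight splitting $w(x)=(1-x)^\alpha(1+x)^\beta\cdot(1-x)^{\nu-\alpha}(1+x)^{\mu-\beta}$ yields $|a_0|\le C'\Epar n{\Dx f}$, the $L^{p'}$-integrability of the auxiliary weight factor being exactly what the four cases of parameter restrictions in Lemma~\ref{lm:E-D} ensure. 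The three estimates combine to give the required inequality.
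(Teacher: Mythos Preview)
Your approach is essentially the paper's own: subtract from $f$ a polynomial built from the Jacobi expansion of a best approximant of $\Dx f$, then apply Lemma~\ref{lm:E-D}. The paper writes the correction as $\sum_{k=0}^{n-1}\frac{\lambda_k}{k(k+\nu+\mu+1)}\Px k{\nu,\mu}x$ and simply asserts that applying $\Dx$ to this sum reproduces $P_n$; you correctly notice that the $k=0$ summand is undefined and separate it out, so that $\Dx h_n=P_n^*-a_0$ and the constant $a_0$ must be controlled.

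The gap is in your claim that $\int_{-1}^1\Dx f(x)\,w(x)\,dx=0$. Integration by parts gives this only if the boundary term $(1-x)^{\nu+1}(1+x)^{\mu+1}f'(x)$ vanishes at $\pm1$, and membership in $\AD$ does not force that. A concrete counterexample under the hypotheses of Lemma~\ref{lm:E-D} is case~(1), $\nu=\mu=-\tfrac12$, with $f(x)=\sqrt{1-x^2}$: here $f\in\AD$ (indeed $\Dx f=-f$), yet $(1-x^2)^{1/2}f'(x)=-x$ does not vanish at the endpoints, and one computes directly
\[
\int_{-1}^1\Dx f(x)\,(1-x^2)^{-1/2}\,dx=\int_{-1}^1(-1)\,dx=-2\neq0.
\]
Hence your Jacobi-coefficient identity $a_0\int w=\int(P_n^*-\Dx f)\,w$ fails, and the bound $|a_0|\le C'\Epar n{\Dx f}$ is not established. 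The paper's proof, by writing the sum from $k=0$, sidesteps rather than resolves this same issue; a fully rigorous argument has to handle the constant term of the approximant by other means.
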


\begin{proof}
Let $P_n(x)$ be the algebraic polynomial of best approximation
of the function $\Dx f(x)$ of degree
not greater than $n-1$. It is obvious that the
polynomial $P_n(x)$ may be written in the following form
\begin{displaymath}
P_n(x)=\sum_{k=0}^{n-1}\lambda_k\Px k{\nu,\mu}x.
\end{displaymath}
Put
\begin{displaymath}
g(x)=f(x)
+\sum_{k=0}^{n-1}\frac{\lambda_k}{k(k+\nu+\mu+1)}
  \Px k{\nu,\mu}x.
\end{displaymath}
From Lemma~\ref{lm:E-D} it follows
that \cite[p.~171]{erdelyi-m-o-t:transcendental}
\begin{multline*}
\Epar n g\le\frac{\Cn}{n^2}\norm{\Dx g(x)}\\
=\frac{\lastC}{n^2}
    \norm{\Dx f(x)
      +\sum_{k=0}^{n-1}\frac{\lambda_k}{k(k+\nu+\mu+1)}
        \Dx P_k^{(\nu,\mu)}(x)}\\
=\frac{\lastC}{n^2}
    \norm{\Dx f(x)
      -\sum_{k=0}^{n-1}\lambda_k P_k^{(\nu,\mu)}(x)}
  =\frac{\lastC}{n^2}\Epar n{\Dx f}.
\end{multline*}
Thus, considering that the function $f(x)-g(x)$ is an
algebraic polynomial of degree not greater than $n-1$,
we obtain
\begin{multline*}
\E\le\Epar n{f-g}+\Epar n{g}=\Epar n{g}\\
\le\frac{\lastC}{n^2}\Epar n{\Dx f}.
\end{multline*}

The corollary is proved.
\end{proof}

Note that an analogue to the corollary
is given in~\cite{potapov:steklov-01}.

\section{Statements of results}

Now we formulate and prove our results.

\begin{thm}\label{th:E-ED}
Let be given numbers~$p$, $\alpha$, $\beta$, $\nu$, $\mu$
and~$r$ such that $\allp$, $r\in\mathbb N$,
$\nu\ge\mu\ge-\frac12$;
\begin{enumerate}
\item if $\nu=\mu=-\frac12$, then $\alpha=\beta=-\frac1{2p}$;
\item if $\nu=\mu>-\frac12$, then $\alpha=\beta$,
and
\begin{alignat*}2
-\frac12    &<\alpha\le\nu
  &\quad &\text{for $p=1$},\\
-\frac1{2p} &<\alpha<\nu+\frac12-\frac1{2p}
  &\quad &\text{for $1<p<\infty$},\\
0           &\le\alpha<\nu+\frac12
  &\quad &\text{for $p=\infty$};
\end{alignat*}
\item if $\nu>\mu=-\frac12$, then $\beta=-\frac1{2p}$,
and
\begin{alignat*}2
-\frac12    &<\alpha\le\nu
  &\quad &\text{for $p=1$},\\
-\frac1{2p} &<\alpha<\nu+\frac12-\frac1{2p}
  &\quad &\text{for $1<p<\infty$},\\
0           &\le\alpha<\nu+\frac12
  &\quad &\text{for $p=\infty$};
\end{alignat*}
\item if $\nu>\mu>-\frac12$, then $\nu-\mu>\alpha-\beta\ge0$,
and
\begin{alignat*}2
-\frac12    &<\beta\le\mu
  &\quad &\text{for $p=1$},\\
-\frac1{2p} &<\beta<\mu+\frac12-\frac1{2p}
  &\quad &\text{for $1<p<\infty$},\\
0 				&\le\beta<\mu+\frac12
  &\quad &\text{for $p=\infty$}.
\end{alignat*}
\end{enumerate}
Let~$\varphi$ be a function of modulus of continuity
type such that
\begin{displaymath}
\sum_{j=n+1}^\infty\frac1j\varphi\prn{\frac1j}
\le\Cn\varphi\prn{\frac1n},
\end{displaymath}
where constant~$\lastC$ does not depend on~$n$.
Let $f(x)\in\Lp$. Necessary and sufficient condition
for the function~$f(x)$ to have the derivative of
order $2r-1$ inside of the interval $(-1,1)$ and
\begin{displaymath}
\Epar n{\Dx^r f}\le\Cn\varphi\prn{\frac1n}
\end{displaymath}
is that the following inequality is satisfied
\begin{displaymath}
\E\le\Cn n^{-2r}\varphi\prn{\frac1n},
\end{displaymath}
where constants~$\prevC$ and~$\lastC$ do not depend on~$f$
and~$n$.
\end{thm}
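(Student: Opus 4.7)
The plan is to establish the necessity and the sufficiency of the condition separately.

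For the necessity direction, assuming $f$ has an absolutely continuous derivative of order $2r-1$ inside $(-1,1)$ and $\Epar n{\Dx^r f}\le C\varphi(1/n)$, I would iterate Corollary~\ref{cr:E-D} $r$ times to obtain
\begin{displaymath}
\E\le\frac{C}{n^2}\Epar n{\Dx f}\le\dots\le\frac{C}{n^{2r}}\Epar n{\Dx^r f}\le\frac{C}{n^{2r}}\varphi\prn{\frac1n}.
\end{displaymath}
The smoothness assumption together with $\Dx^r f\in\Lp$ places each of $f,\Dx f,\dots,\Dx^{r-1}f$ into $\AD$, so that the corollary applies at every step.

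For the sufficiency direction, let $P_n$ denote a polynomial of degree at most $n-1$ realising $\E$, and set $Q_k=P_{2^k}-P_{2^{k-1}}$ for $k\ge 1$, $Q_0=P_1$. A triangle-inequality argument together with the assumption and the doubling property of $\varphi$ gives $\norm{Q_k}\le C\,2^{-2rk}\varphi(1/2^k)$. The crucial analytic step is a Bernstein-Markov type inequality for the operator $\Dx$:
\begin{displaymath}
\norm{\Dx^r P}\le Cn^{2r}\norm{P}
\end{displaymath}
for every polynomial $P$ of degree at most $n-1$. To prove this I would write $\Dx P=(1-x^2)P''+(\mu-\nu-(\nu+\mu+2)x)P'$; the first summand satisfies $\norm{(1-x^2)P''}=\normpar{P''}{p,\alpha+1,\beta+1}\le Cn^2\norm{P}$ by two applications of the Bernstein inequality in Lemma~\ref{lm:bernshtein-markov}, while combining that inequality with the weight-shifting inequality in the same lemma yields $\normpar{P'}{p,\alpha,\beta}\le Cn^2\norm{P}$, which bounds the bounded-coefficient first-order term. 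Since $\Dx$ preserves the degree of a polynomial, iterating gives the displayed $\Dx^r$-bound, and together with the estimate on $\norm{Q_k}$ one obtains $\norm{\Dx^r Q_k}\le C\varphi(1/2^k)$. The dyadic summation argument from the proof of Lemma~\ref{lm:rho-sigma}, invoking the hypothesis on $\varphi$, then shows that $\sum_k\Dx^r Q_k$ converges in $\Lp$ to a function $g\in\Lp$ satisfying
\begin{displaymath}
\normpar{g-\Dx^r P_{2^N}}{p,\alpha,\beta}\le C\varphi\prn{\frac1{2^N}}.
\end{displaymath}

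The main obstacle is identifying $g$ with $\Dx^r f$ and recovering from this the claim that $f$ has an absolutely continuous derivative of order $2r-1$ on every segment $[a,b]\subset(-1,1)$. On any such segment the weight $(1-x)^\alpha(1+x)^\beta$ is bounded above and below by positive constants, so Lemma~\ref{lm:bernshtein-markov} applied $j$ times to $Q_k$ yields
\begin{displaymath}
\normpar{Q_k^{(j)}}{L_p[a,b]}\le C\,2^{jk}\norm{Q_k}\le C\,2^{-(2r-j)k}\varphi\prn{\frac1{2^k}}
\end{displaymath}
for every $j\le 2r$. Summing in $k$ shows that $P_{2^N}^{(j)}$ converges in $L_p[a,b]$ for each $j=0,1,\dots,2r$; standard arguments then imply that the limit equals $f^{(j)}$, that $f^{(2r-1)}$ is absolutely continuous on $[a,b]$, and that $\Dx^r P_{2^N}\to\Dx^r f$ in $L_p[a,b]$. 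Hence $g=\Dx^r f$ almost everywhere on $(-1,1)$, and for $2^{N-1}<n\le 2^N$ the doubling and monotonicity properties of $\varphi$ yield $\Epar n{\Dx^r f}\le C\varphi(1/n)$, finishing the proof.
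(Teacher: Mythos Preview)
Your proof is correct and follows essentially the same approach as the paper: necessity by iterating Corollary~\ref{cr:E-D}, sufficiency via the dyadic blocks $Q_k=P_{2^k}-P_{2^{k-1}}$, the Bernstein--Markov bound $\norm{\Dx^r Q_k}\le C\,2^{2kr}\norm{Q_k}$ obtained from Lemma~\ref{lm:bernshtein-markov}, and the Dini-type hypothesis on~$\varphi$ to sum the tail. The only difference is in the identification step: the paper simply observes that $\sum_k Q_k\to f$ and $\sum_k\Dx^r Q_k$ both converge in $L_p[a,b]$ and cites a standard termwise-differentiation result in Nikolskii to conclude that the latter limit is $\Dx^r f$, whereas you supply the explicit bounds $\|Q_k^{(j)}\|_{L_p[a,b]}\le C\,2^{-(2r-j)k}\varphi(1/2^k)$ for $0\le j\le 2r$ to reach the same conclusion directly.
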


\begin{proof}
The necessity of the condition is implied by induction
directly from Corollary~\ref{cr:E-D}.
We prove that the condition is sufficient.

Let $P_n(x)$ be the algebraic polynomial of
best approximation of the function~$f$.
We consider the sequence of polynomials $Q_k(x)$ given by
\begin{displaymath}
Q_k(x)=P_{2^k}(x)-P_{2^{k-1}}(x) \quad (k=1,2,\dotsc)
\end{displaymath}
and $Q_0(x)=P_1(x)$. From the conditions of the theorem,
considering the properties of the function~$\varphi$
for $k\ge1$ it follows that
\begin{multline}\label{eq:Qk-r}
\norm{Q_k}=\norm{P_{2^k}-P_{2^{k-1}}}
  \le\Epar{2^k}f+\Epar{2^{k-1}}f\\
\le2\Epar{2^{k-1}}f
  \le\Cn 2^{-2(k-1)r}\varphi\prn{\frac1{2^{k-1}}}
  \le\Cn 2^{-2kr}\varphi\prn{\frac1{2^k}}.
\end{multline}
Applying Lemma~\ref{lm:bernshtein-markov} twice we get
\begin{multline*}
\norm{\Dx Q_k(x)}
  \le\normpar{Q''_k(x)}{p,\alpha+1,\beta+1}
    +(|\mu-\nu|+|\nu+\mu+2|)\norm{Q'_k(x)}\\
\le\Cn 2^{2k}\norm{Q_k},
\end{multline*}
where constant~$\lastC$ does not depend on~$k$.
Applying this inequality $r$~times we obtain
\begin{displaymath}
\norm{\Dx^r Q_k(x)}
\le\Cn[cn:DQk]2^{2kr}\norm{Q_k}.
\end{displaymath}
Thus inequality~\eqref{eq:Qk-r} yields
\begin{displaymath}
\sum_{k=1}^N\norm{\Dx^r Q_k(x)}
\le\Cn\sum_{k=1}^N\varphi\prn{\frac1{2^k}}.
\end{displaymath}
Noting that
\begin{displaymath}
\sum_{j=2^k}^{2^{k+1}-1}\frac1j\varphi\prn{\frac1j}
\ge C_{\varphi,1}^{-1}C_{\varphi,2}^{-1}
  \varphi\prn{\frac1{2^k}}\sum_{j=2^k}^{2^{k+1}-1}\frac1j
\ge\Cn\varphi\prn{\frac1{2^k}},
\end{displaymath}
considering the conditions of the theorem we have
\begin{displaymath}
\sum_{k=1}^\infty\norm{\Dx^r Q_k(x)}
\le\Cn\sum_{k=1}^\infty\sum_{j=2^k}^{2^{k+1}-1}
  \frac1j\varphi\prn{\frac1j}
\le\lastC\sum_{k=1}^\infty\frac1k\varphi\prn{\frac1k}
<\infty.
\end{displaymath}

Since
\begin{displaymath}
\sum_{k=0}^n Q_k(x)=P_{2^n}(x),
\end{displaymath}
from the inequality~\eqref{eq:Qk-r} and the conditions
of the theorem it follows that for every
segment $[a,b]\subset(-1,1)$ the series
$\sum_{k=0}^\infty Q_k(x)$ converges in terms of
$L_p[a,b]$ metrics to the function~$f(x)$.
Since the series
\begin{displaymath}
\sum_{k=0}^\infty\Dx^r Q_k(x)
\end{displaymath}
also converges in terms of $L_p[a,b]$ metrics,
then \cite[p.~202]{nikolskii:priblizhenie}
these series converge to the function $\Dx^r f(x)$.
This way we showed that the function~$f(x)$
has the derivative of order $2r-1$ absolutely
continuos on every segment $[a,b]\subset(-1,1)$.

Now we estimate the expression
\begin{displaymath}
I=\norm{\Dx^r f(x)-\Dx^r P_{2^N}(x)}.
\end{displaymath}
From what we said above it is obvious that
\begin{multline*}
I\le\sum_{k=N+1}^\infty\norm{\Dx^r Q_k(x)}
  \le\refC{cn:DQk}
    \sum_{k=N+1}^\infty 2^{2kr}\norm{Q_k}\\
\le\Cn\varphi\prn{\frac1{2^k}}
  \le\Cn\sum_{k=2^{N+1}}^\infty
      \frac1k\varphi\prn{\frac1k}.
\end{multline*}
Hence we conclude that
\begin{displaymath}
I\le\Cn\varphi\prn{\frac1{2^{N+1}}}
\le\Cn\varphi\prn{\frac1{2^N}}.
\end{displaymath}

Put
\begin{displaymath}
R_n(x)=\Dx^r P_{2^N}(x) \quad (2^N\le n<2^{N+1});
\end{displaymath}
we have
\begin{multline*}
\Epar n{\Dx^r f}\le\norm{\Dx^r f(x)-R_n(x)}\\
\le\lastC\varphi\prn{\frac1{2^N}}
  \le\Cn\varphi\prn{\frac1n}.
\end{multline*}

Theorem~\ref{th:E-ED} is proved.
\end{proof}

Note that for a power function $\varphi(\delta)=\delta^\lambda$,
the assertion of the theorem is given in~\cite{potapov:dokl-05}.

\begin{thm}\label{th:HsubE}
Let be given a function~$\varphi$ of modulus
of continuity type and numbers~$p$,
$\alpha$ and~$\beta$ such that $\allp$;
\begin{alignat*}3
\alpha &\le2,       &\quad \beta &\le2
  &\quad &\text{for $p=1$},\\
\alpha &<3-\frac1p, &\quad \beta &<3-\frac1p
  &\quad &\text{for $1<p\le\infty$}.
\end{alignat*}
Let $f\in\Lp$. If
\begin{displaymath}
\w\le M\varphi(\delta),
\end{displaymath}
then
\begin{displaymath}
\E\le CM\varphi\prn{\frac1n},
\end{displaymath}
where constant~$C$ does not depend on~$f$, $M$ dhe~$n$.
\end{thm}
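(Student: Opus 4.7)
The theorem is of Jackson type, and the natural strategy is to build an explicit polynomial approximant from a Jackson-type average of the generalised translation~$\T t{f,x}$ and then reduce everything to the hypothesized bound on the modulus.

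First I would verify the embedding $\Lp\subset\Lmu$ needed to invoke Lemma~\ref{th:T-Q}. By H\"older's inequality applied to $|f(x)|(1-x)^2(1+x)^2=[|f(x)|(1-x)^\alpha(1+x)^\beta]\cdot(1-x)^{2-\alpha}(1+x)^{2-\beta}$, the auxiliary weight $(1-x)^{2-\alpha}(1+x)^{2-\beta}$ must lie in $L_{p'}$, which is exactly the hypothesis $\alpha,\beta<3-1/p$ for $1<p\le\infty$ (with the natural endpoint modification $\alpha,\beta\le 2$ at $p=1$, where the weight need only be bounded). Hence $f\in\Lmu$ and the construction in Lemma~\ref{th:T-Q} applies.

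Given $n$, fix a positive integer~$q$ (to be chosen large below) and pick $m$ with $(q+2)(m-1)\le n-1$, say $m=\lfloor(n-1)/(q+2)\rfloor+1$. By Lemma~\ref{th:T-Q},
\begin{displaymath}
Q(x)=\frac1{\gamma_m}\int_0^\pi\T t{f,x}\krn t\sin^3 t\,dt
\end{displaymath}
is an algebraic polynomial of degree at most $n-1$. Since $\gamma_m^{-1}\krn t\sin^3 t$ is a probability density on $[0,\pi]$,
\begin{displaymath}
f(x)-Q(x)=\frac1{\gamma_m}\int_0^\pi\prn{f(x)-\T t{f,x}}\krn t\sin^3 t\,dt,
\end{displaymath}
so the generalised Minkowski inequality in the $\Lp$-norm combined with $\norm{\T t{f,x}-f(x)}\le\wpar{f,t}\le M\varphi(t)$ yields
\begin{displaymath}
\E\le\norm{f-Q}\le\frac M{\gamma_m}\int_0^\pi\varphi(t)\krn t\sin^3 t\,dt.
\end{displaymath}

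The remaining and main step is the pure kernel estimate $\gamma_m^{-1}\int_0^\pi\varphi(t)\krn t\sin^3 t\,dt\le C\varphi(1/m)$. I would split at $t=1/m$: on $[0,1/m]$ monotonicity gives $\varphi(t)\le C_{\varphi,1}\varphi(1/m)$, reducing that piece to a constant multiple of $\gamma_m\varphi(1/m)$. On $[1/m,\pi]$ I would decompose dyadically into intervals $[2^j/m,2^{j+1}/m]$ and combine the standard Jackson-kernel estimate (cf.~\cite[p.~233--235]{nikolskii:priblizhenie}) $\krn t\sin^3 t\le Ct^{-(2q+1)}$ with the iterated doubling $\varphi(2^j/m)\le C_{\varphi,2}^j\varphi(1/m)$. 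The resulting series is geometric with ratio $C_{\varphi,2}\cdot 2^{-2q}$, which converges as soon as $q$ is chosen with $2^{2q}>C_{\varphi,2}$. Using the standard lower bound $\gamma_m\ge cm^{2q}$, combining with $m\asymp n$, and applying the doubling property once more to pass from $\varphi(1/m)$ to $\varphi(1/n)$, one concludes $\E\le CM\varphi(1/n)$. The main obstacle is precisely this kernel bound: the freedom to enlarge~$q$ is what permits the polynomial decay of the Jackson kernel to dominate an arbitrary modulus-of-continuity-type~$\varphi$ controlled only by its doubling constant.
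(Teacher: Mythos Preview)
Your argument is correct and follows the paper's route exactly: the embedding $\Lp\subset\Lmu$, the construction of $Q$ via Lemma~\ref{th:T-Q}, Minkowski, and then a Jackson--kernel estimate with $q$ chosen large relative to the doubling constant of~$\varphi$. The only cosmetic difference is in the final kernel bound: the paper first extracts from the doubling property a polynomial growth estimate $\varphi(lt)\le C(1+l)^\gamma\varphi(t)$ and then quotes the standard Jackson moment $\gamma_m^{-1}\int_0^\pi t^\gamma\krn t\sin^3t\,dt\le Cm^{-\gamma}$ (requiring $2q>\gamma$), whereas you carry out the equivalent dyadic summation directly (requiring $2^{2q}>C_{\varphi,2}$, which is the same constraint).
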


\begin{proof}
From the properties of the function~$\varphi$
it follows that there exists a constant~$\gamma$
such that for every $l>0$ the following inequality
is satisfied
\begin{displaymath}
\varphi(lt)\le\Cn[cn:gamma](l+1)^\gamma\varphi(t),
\end{displaymath}
where constant~$\lastC$ does not depend on~$l$ and~$t$.

Indeed, if $l<1$, then
\begin{displaymath}
\varphi(lt)\le C_{\varphi,1}\varphi(t),
\end{displaymath}
i.e.\ we get $\gamma\ge0$.
If $l\ge1$, then choosing the positive integer~$m$
such that
\begin{displaymath}
2^{m-1}\le l<2^m
\end{displaymath}
we have
\begin{displaymath}
\varphi(lt)\le C_{\varphi,1}\varphi(2^m t)
\le C_{\varphi,1}C_{\varphi,2}^m\varphi(t).
\end{displaymath}
We choose the positive integer~$N$ such that
\begin{displaymath}
2^{N-1}\le C_{\varphi,2}<2^N,
\end{displaymath}
getting
\begin{displaymath}
\varphi(lt)\le C_{\varphi,1}2^{Nm}\varphi(t)
=C_{\varphi,1}2^N2^{N(m-1)}\varphi(t)
\le\Cn(l+1)^N\varphi(t),
\end{displaymath}
i.e.\ $\gamma\ge N$.

We choose a $\gamma>0$ and a positive integer~$q$
such that $2q>\gamma$, and for every positive
integer~$n$ we choose the positive integer~$m$
satisfying the condition
\begin{equation}\label{eq:m}
\frac{n-1}{q+2}<m\le\frac{n-1}{q+2}+1.
\end{equation}
It is easy to prove that under the condition of
the theorem we have $f\in\Lmu$.
Thus, for those~$q$ and~$m$ the algebraic polynomial~$Q(x)$
defined in Lemma~\ref{th:T-Q}
is an algebraic polynomial of degree not greater than $n-1$.
Hence
\begin{multline*}
\E\le\norm{f(x)-Q(x)}\\
=\norm{\frac1{\gamma_m}
  \int_0^\pi\prn{f(x)-\T t{f,x}}\krn t\sin^3t\,dt}.
\end{multline*}
Applying the generalised Minkowski's inequality we obtain
\begin{displaymath}
\E\le\frac1{\gamma_m}
  \int_0^\pi\norm{\T t{f,x}-f(x)}\krn t\sin^3t\,dt.
\end{displaymath}
There from by the conditions of the theorem we get
\begin{displaymath}
\E\le\frac M{\gamma_m}
  \int_0^\pi\varphi(t)\krn t\sin^3t\,dt.
\end{displaymath}
Since
\begin{displaymath}
\varphi(t)=\varphi\prn{nt\cdot\frac1n}
\le\refC{cn:gamma}(1+nt)^\gamma\varphi\prn{\frac1n},
\end{displaymath}
we have
\begin{multline*}
\E\le\refC{cn:gamma}\frac M{\gamma_m}\varphi\prn{\frac1n}
	\int_0^\pi(1+nt)^\gamma\krn t\sin^3t\,dt\\
\le\Cn M\varphi\prn{\frac1n}
	\brc{1+\frac{n^\gamma}{\gamma_m}
	  \int_0^\pi t^\gamma\krn t\sin^3t\,dt}.
\end{multline*}
Applying now the standard estimate of Jackson's kernel
and the inequality~\eqref{eq:m} we obtain
\begin{displaymath}
\E\le\Cn M\varphi\prn{\frac1n}(1+n^\gamma m^{-\gamma})
\le\Cn M\varphi\prn{\frac1n}.
\end{displaymath}

Theorem~\ref{th:HsubE} is proved.
\end{proof}

\begin{thm}\label{th:EsubH}
Let be given numbers~$p$, $\alpha$ and~$\beta$ such that
$\allp$;
\begin{alignat*}3
\alpha &>1-\frac1{2p}, &\quad \beta &>1-\frac1{2p}
  &\quad &\text{for $1\le p<\infty$},\\
\alpha &\ge1,          &\quad \beta &\ge1
  &\quad &\text{for $p=\infty$}.
\end{alignat*}
Let~$\varphi$ be a function of modulus of continuity
type such that inequality~\eqref{eq:phi-3} for
\begin{displaymath}
\lambda_0
=\max\brc{
  |\alpha-\beta|,\alpha-\frac32+\frac1{2p},
  \beta-\frac32+\frac1{2p}},
\end{displaymath}
and inequality
\begin{equation}\label{eq:phi-4}
\sum_{j=1}^n j\varphi\prn{\frac1j}
\le C_{\varphi,4}n^2\varphi\prn{\frac1n}
\end{equation}
are satisfied, where constant~$C_{\varphi,4}$ does not
depend on~$n$. Let $f\in\Lp$.
If
\begin{displaymath}
\E\le M\varphi\prn{\frac1n},
\end{displaymath}
then
\begin{displaymath}
\w\le CM\varphi(\delta),
\end{displaymath}
where constant~$C$ does not depend on~$f$, $M$ and~$\delta$.
\end{thm}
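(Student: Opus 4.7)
The plan is to decompose $f$ dyadically through its polynomials of best approximation and estimate $\T{t}{f,x} - f(x)$ piece by piece. Let $P_n$ be the polynomial of best approximation of $f$ in $\Lp$ of degree not greater than $n-1$, and set $Q_0(x) = P_1(x)$, $Q_k(x) = P_{2^k}(x) - P_{2^{k-1}}(x)$ for $k \ge 1$. The hypothesis together with the modulus-of-continuity properties of $\varphi$ yields $\norm{Q_k} \le CM\varphi(2^{-k})$. Given $\delta \in (0,1]$, I would choose the integer $N$ with $2^{-N-1} < \delta \le 2^{-N}$ and, for each $|t|\le\delta$, split
\begin{displaymath}
\T{t}{f,x} - f(x) = \bigl[\T{t}{P_{2^N},x} - P_{2^N}(x)\bigr] + \sum_{k=N+1}^\infty \bigl[\T{t}{Q_k,x} - Q_k(x)\bigr].
\end{displaymath}

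For the tail I would apply Lemma~\ref{th:bound-T} to each $\T{t}{Q_k,x}$, producing a finite sum of terms of the form $\normpar{Q_k}{p,\alpha-a,\beta-b}$ with $a,b\ge 0$. Since $Q_k$ is a polynomial of degree less than $2^k$, the second inequality in Lemma~\ref{lm:bernshtein-markov} converts each such weighted norm to $\norm{Q_k}$ at the cost of a factor $2^{2k\max\{a,b\}}$. The parameter $\lambda_0$ appearing in the statement is precisely the supremum of these $\max\{a,b\}$ across the four summands of Lemma~\ref{th:bound-T}, and the inequality $|t|\le 2^{-N}$ combines the powers of $t$ favourably with the Bernstein factors. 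Summation over $k\ge N+1$ and the hypothesis~\eqref{eq:phi-3} then yield a bound $CM\varphi(2^{-N}) \le CM\varphi(\delta)$.

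For the head the same crude application of Lemma~\ref{th:bound-T} is not sufficient, because over $k\le N$ the $t^{2a}$ factors cannot damp the polynomial Bernstein growth. Here I would instead use a forward polynomial estimate of the form
\begin{displaymath}
\norm{\T{t}{R,x} - R(x)} \le C t^2 n^2 \norm{R}
\end{displaymath}
for every polynomial $R$ of degree not greater than $n-1$; such an estimate reflects the fact that $\tilde T_t$ differs from the identity by $O(t^2)$ on each fixed Jacobi polynomial, and can be obtained by expanding the kernel of $\tilde T_t$ around $t=0$ and absorbing the resulting derivatives into factors of $n^2$ via Lemma~\ref{lm:bernshtein-markov}, paralleling the argument used for the symmetric $\sT$ in Lemma~\ref{lm:E-D}. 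Applied to each $Q_k$, it produces
\begin{displaymath}
\sum_{k=0}^N \norm{\T{t}{Q_k,x} - Q_k(x)} \le CM t^2 \sum_{k=0}^N 2^{2k}\varphi(2^{-k}),
\end{displaymath}
and the dyadic form of hypothesis~\eqref{eq:phi-4} bounds the right-hand side by $CM t^2 \cdot 2^{2N}\varphi(2^{-N}) \le CM\varphi(\delta)$ since $t\le 2^{-N}$.

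Adding the head and tail contributions and taking the supremum over $|t|\le\delta$ yields $\w \le CM\varphi(\delta)$. The principal technical hurdle, and the place where the specific hypotheses on $\alpha,\beta$ genuinely enter, is extracting the forward polynomial estimate for $\T{t}{\,\cdot\,} - I$ in weighted norms; once it is in hand, the remainder is careful dyadic bookkeeping that matches the $t^{2a}$ factors furnished by Lemma~\ref{th:bound-T} against the Bernstein-type growth on the tail (controlled by~\eqref{eq:phi-3} through the choice of $\lambda_0$) and the $t^2$ smallness against the Bernstein growth on the head (controlled by~\eqref{eq:phi-4}).
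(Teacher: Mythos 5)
Your proposal is correct and follows essentially the same route as the paper: the same dyadic decomposition via best-approximation polynomials, Lemma~\ref{th:bound-T} plus Bernstein--Markov and hypothesis~\eqref{eq:phi-3} for the tail (the paper packages this computation as Lemma~\ref{lm:rho-sigma} applied to $f-P_{2^N}$), and the forward estimate $\norm{\T t{Q_k,x}-Q_k(x)}\le C\delta^2 2^{2k}\norm{Q_k}$ summed via~\eqref{eq:phi-4} for the head. The one estimate you flag as the principal hurdle is exactly the one the paper does not prove either, citing it from an earlier work of the authors, so your sketch of its derivation is the only point at which you go beyond (rather than diverge from) the published argument.
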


\begin{proof}
Let $P_n(x)$ be the algebraic polynomial of
best approximation of degree not greater
than $n-1$ of the function~$f$.
Let the polynomials $Q_k(x)$ be given by
\begin{displaymath}
Q_k(x)=P_{2^k}(x)-P_{2^{k-1}}(x) \quad (k=1,2,\dotsc)
\end{displaymath}
and $Q_0(x)=P_1(x)$. Since for $k\ge1$ we have
\begin{displaymath}
\norm{Q_k}\le\Epar{2^k}f+\Epar{2^{k-1}}f,
\end{displaymath}
considering the conditions of the theorem we have
\begin{equation}\label{eq:Qk}
\norm{Q_k}\le\Cn M\varphi\prn{\frac1{2^k}}.
\end{equation}

We estimate the expression
\begin{displaymath}
I=\norm{\T t{f,x}-f(x)}.
\end{displaymath}
Let $0<|t|\le\delta$. Since the operator $\T t{f,x}$ is linear,
for every positive integer~$N$ we have
\begin{displaymath}
I\le\norm{\T t{f-P_{2^N},x}-\prn{f(x)-P_{2^N}(x)}}
  +\norm{\T t{P_{2^N},x}-P_{2^N}(x)}.
\end{displaymath}
Since $P_{2^N}(x)=\sum_{k=0}^N Q_k(x)$,
we get
\begin{multline*}
I\le\norm{\T t{f-P_{2^N},x}-\prn{f(x)-P_{2^N}(x)}}
  +\sum_{k=0}^N\norm{\T t{Q_k,x}-Q_k(x)}\\
=J+\sum_{k=1}^N I_k.
\end{multline*}

Let~$N$ be chosen so that
\begin{equation}\label{eq:delta}
\frac\pi{2^N}<\delta\le\frac\pi{2^{N-1}}.
\end{equation}
We prove that the following inequalities are satisfied
\begin{equation}\label{eq:J}
J\le\Cn M\varphi(\delta)
\end{equation}
and
\begin{equation}\label{eq:Ik}
I_k\le\Cn M\delta^2 2^{2k}\varphi\prn{\frac1{2^k}},
\end{equation}
where constants~$\prevC$ and~$\lastC$ do not depend
on~$f$, $M$, $\delta$ and~$k$.

First we consider~$J$.
Applying Lemma~\ref{th:bound-T} to the function
$\Phi(x)=f(x)-P_{2^N}(x)$, considering that $|t|\le\delta$
we obtain
\begin{multline*}
J\le\norm{\T t{\Phi,x}}+\norm{\Phi(x)}\\
\le\Cn\Big(
  \norm{\Phi}
	+\delta^{2(\gamma_1+\gamma_2)}
	  \normpar{\Phi}{p,\alpha-\gamma_1,\beta-\gamma_2}
	+\delta^{2\gamma_3}
	  \normpar{\Phi}{p,\alpha-\gamma_3,\beta-\gamma_3}\\
+\delta^{2(\gamma_1+\gamma_2+\gamma_3)}
		\normpar{\Phi}{p,\alpha-\gamma_1-\gamma_3,
		  \beta-\gamma_2-\gamma_3}
  \Big),
\end{multline*}
where numbers~$\gamma_1$, $\gamma_2$ and~$\gamma_3$
are chosen by the conditions of Lemma~\ref{th:bound-T}.
Applying Lemma~\ref{lm:rho-sigma}, considering
the conditions of the theorem we obtain
\begin{multline*}
J\le\Cn M\varphi\prn{\frac1{2^N}}\Big(1
	+\delta^{2(\gamma_1+\gamma_2)}2^{-2N(\gamma_1+\gamma_2)}\\
+\delta^{2\gamma_3}2^{-2N\gamma_3}
	+\delta^{2(\gamma_1+\gamma_2+\gamma_3)}
		2^{-2N(\gamma_1+\gamma_2+\gamma_3)}\Big)
\end{multline*}
for $\lambda>\lambda_0+\varepsilon$, where constant~$\lastC$
does not depend on~$f$, $M$ and~$\delta$, and either
$\varepsilon=0$ or $\varepsilon$~is an arbitrary number from
the interval $0<\varepsilon<\frac12$.
Hence this inequality holds true for every
$\lambda>\lambda_0$.
Finally, applying the inequality~\eqref{eq:delta}
and the properties of the function~$\varphi$ we obtain
\begin{displaymath}
J\le\Cn M\varphi\prn{\frac1{2^N}}
\le\Cn M\varphi(\delta).
\end{displaymath}
Thus inequality~\eqref{eq:J} is proved.

Now we prove the inequality~\eqref{eq:Ik}.
It can be proved that~\cite{p-berisha:east-98}
\begin{displaymath}
I_k\le\Cn\delta^2 2^{2k}\norm{Q_k},
\end{displaymath}
where constant~$\lastC$ does not depend
on~$M$, $\delta$ and~$k$.
Hence inequality~\eqref{eq:Qk} yields
\begin{displaymath}
I_k\le\Cn M\delta^2 2^{2k}\varphi\prn{\frac1{2^k}}.
\end{displaymath}
Inequality~\eqref{eq:Ik} is proved.

Inequalities~\eqref{eq:J} and~\eqref{eq:Ik} imply
\begin{displaymath}
I\le\Cn M\brc{\varphi(\delta)
  +\delta^2\sum_{k=1}^N2^{2k}\varphi\prn{\frac1{2^k}}}.
\end{displaymath}
Note that
\begin{displaymath}
\sum_{j=2^k}^{2^{k+1}-1}j\varphi\prn{\frac1j}
\ge C_{\varphi,1}^{-1}C_{\varphi,2}^{-1}
  \varphi\prn{\frac1{2^k}}
	\sum_{j=2^k}^{2^{k+1}-1}j
\ge\Cn2^{2k}\varphi\prn{\frac1{2^k}}.
\end{displaymath}
Hence considering the inequality~\eqref{eq:phi-4}
we have
\begin{multline*}
\sum_{k=1}^N2^{2k}\varphi\prn{\frac1{2^k}}
  \le\Cn\sum_{k=1}^N\sum_{j=2^k}^{2^{k+1}-1}
      j\varphi\prn{\frac1j}
  \le\lastC\sum_{k=1}^{2^{N+1}}k\varphi\prn{\frac1k}\\
\le\Cn2^{2(N+1)}\varphi\prn{\frac1{2^{N+1}}}
  \le\Cn2^{2N}\varphi\prn{\frac1{2^N}}.
\end{multline*}
There from, applying the inequality~\eqref{eq:delta}
we get
\begin{displaymath}
I\le\Cn M\prn{\varphi(\delta)
    +\delta^2 2^{2N}\varphi\prn{\frac1{2^N}}}
\le\Cn M\varphi(\delta).
\end{displaymath}

This way for $0<|t|\le\delta$ we proved that
\begin{displaymath}
\norm{\T t{f,x}-f(x)}\le\lastC\varphi(\delta),
\end{displaymath}
where constant~$\lastC$ does not depend on~$f$ and~$t$.
Taking into consideration that $\T0{f,x}=f(x)$,
we conclude that this inequality also holds for $t=0$.
Thus the last inequality implies
\begin{displaymath}
\w\le\lastC M\varphi(\delta).
\end{displaymath}

Theorem~\ref{th:EsubH} is proved.
\end{proof}

\begin{thm}\label{th:coincidence}
Let be given numbers~$p$, $\alpha$ and~$\beta$ such that
$\allp$;
\begin{alignat*}3
\frac12      &<\alpha\le2,
  &\quad \frac12      &<\beta\le2
    &\quad &\text{for $p=1$},\\
1-\frac1{2p} &<\alpha<3-\frac1p,
  &\quad 1-\frac1{2p} &<\beta<3-\frac1p
	  &\quad &\text{for $1<p<\infty$},\\
1            &\le\alpha<3,
  &\quad 1            &\le\beta<3
	  &\quad &\text{for $p=\infty$}.
\end{alignat*}
Let~$\varphi$ be a function of modulus of continuity type
such hat inequality~\eqref{eq:phi-3} for
\begin{displaymath}
\lambda_0
=\max\brc{
  |\alpha-\beta|,\alpha-\frac32+\frac1{2p},
  \beta-\frac32+\frac1{2p}},
\end{displaymath}
and inequality~\eqref{eq:phi-4} are satisfied.
Let $f\in\Lp$. For
\begin{displaymath}
\E\le\Cn\varphi\prn{\frac1n},
\end{displaymath}
it is necessary and sufficient that
\begin{displaymath}
\w\le\Cn\varphi(\delta),
\end{displaymath}
where constants~$\prevC$ and~$\lastC$ do not depend
on~$f$, $n$ and~$\delta$.
\end{thm}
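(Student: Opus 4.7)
The plan is to deduce Theorem~\ref{th:coincidence} directly from the combination of Theorems~\ref{th:HsubE} and~\ref{th:EsubH}. Since Theorem~\ref{th:HsubE} gives the implication $\w\le M\varphi(\delta)\Rightarrow \E\le CM\varphi(1/n)$ and Theorem~\ref{th:EsubH} gives the converse implication, the coincidence result will follow as soon as the hypotheses of both theorems are simultaneously satisfied. Thus the core of the proof is a bookkeeping check on the ranges of the parameters, after which each direction is a single-line application.

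The main verification is that the restrictions on $\alpha,\beta$ stated in Theorem~\ref{th:coincidence} are precisely the intersection of those of Theorems~\ref{th:HsubE} and~\ref{th:EsubH}. For $p=1$, Theorem~\ref{th:HsubE} requires $\alpha,\beta\le2$, while Theorem~\ref{th:EsubH} requires $\alpha,\beta>1-\frac1{2p}=\frac12$, giving $\frac12<\alpha,\beta\le2$. For $1<p<\infty$, the intersection of $\alpha,\beta<3-\frac1p$ with $\alpha,\beta>1-\frac1{2p}$ yields the prescribed range. For $p=\infty$, the intersection of $\alpha,\beta<3$ with $\alpha,\beta\ge1$ yields $1\le\alpha,\beta<3$. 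On the side of $\varphi$, the two conditions imposed in Theorem~\ref{th:coincidence}, namely \eqref{eq:phi-3} with the stated $\lambda_0$ and \eqref{eq:phi-4}, are exactly the ones needed in Theorem~\ref{th:EsubH}; Theorem~\ref{th:HsubE} only demands that $\varphi$ be of modulus of continuity type, so nothing additional is required.

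With the parameter check in hand, the two implications unfold without effort. Assuming $\w\le\lastC\varphi(\delta)$, Theorem~\ref{th:HsubE} applied with $M=\lastC$ yields $\E\le\Cn\varphi(1/n)$, which is the necessity part. Conversely, assuming $\E\le\lastC\varphi(1/n)$, Theorem~\ref{th:EsubH} applied with $M=\lastC$ produces $\w\le\Cn\varphi(\delta)$, which is the sufficiency part. No step presents any real obstacle; the sole point requiring care is the verification that the parameter ranges stipulated in the theorem coincide with the intersection described above, and this is a straightforward comparison of inequalities done separately in each of the cases $p=1$, $1<p<\infty$, and $p=\infty$.
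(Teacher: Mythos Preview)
Your proposal is correct and follows exactly the paper's approach: the paper states in a single line that Theorem~\ref{th:coincidence} is implied directly by Theorems~\ref{th:HsubE} and~\ref{th:EsubH}. Your explicit verification that the parameter ranges in Theorem~\ref{th:coincidence} are precisely the intersection of those in the two cited theorems, and that the conditions on~$\varphi$ are those of Theorem~\ref{th:EsubH}, merely spells out what the paper leaves implicit.
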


Theorem~\ref{th:coincidence} is implied directly by
Theorems~\ref{th:HsubE} and~\ref{th:EsubH}.

\begin{thm}\label{th:E-wD}
Let be given numbers~$p$, $\alpha$, $\beta$, $\nu$, $\mu$, $r$,
$\nu_0$ and~$\mu_0$ such that $\allp$, $r\in\mathbb N\cup\{0\}$,
$\nu\ge\mu\ge-\frac12$,
\begin{displaymath}
\nu_0=\min\brc{\nu,\frac52-\frac1{2p}}, \quad
\mu_0=\min\brc{\mu,\frac52-\frac1{2p}};
\end{displaymath}
\begin{enumerate}
\item if $\nu=\mu>\frac12$, then $\alpha=\beta$,
and
\begin{alignat*}2
\frac12      &<\alpha\le\nu_0
  &\quad &\text{for $p=1$},\\
1-\frac1{2p} &<\alpha<\nu_0+\frac12-\frac1{2p}
  &\quad &\text{for $1<p<\infty$},\\
1            &\le\alpha<\nu_0+\frac12
  &\quad &\text{for $p=\infty$};
\end{alignat*}
\item if $\nu>\mu>\frac12$, then $\nu-\mu>\alpha-\beta\ge0$,
and
\begin{alignat*}2
\frac12      &<\beta\le\mu_0
  &\quad &\text{for $p=1$},\\
1-\frac1{2p} &<\beta<\mu_0+\frac12-\frac1{2p}
  &\quad &\text{for $1<p<\infty$},\\
1            &\le\beta<\mu_0+\frac12
  &\quad &\text{for $p=\infty$};
\end{alignat*}
\end{enumerate}
Let~$\varphi$ be a function of modulus of continuity type
such that inequality~\eqref{eq:phi-3} for
\begin{displaymath}
\lambda_0
=\max\brc{
  |\alpha-\beta|,\alpha-\frac32+\frac1{2p},
	\beta-\frac32+\frac1{2p}},
\end{displaymath}
and inequality~\eqref{eq:phi-4} are satisfied.
Let $f\in\Lp$. Necessary and sufficient condition for
\begin{displaymath}
\E\le\frac{\Cn}{n^{2r}}\varphi\prn{\frac1n}
\end{displaymath}
is that the function~$f(x)$ has the derivative of
order $2r$ inside of the interval $(-1,1)$
satisfying the condition
\begin{displaymath}
\wpar{\Dx^r f,\delta}\le\Cn\varphi(\delta),
\end{displaymath}
where constants~$\prevC$ and~$\lastC$ do not depend
on~$f$, $n$ and~$\delta$, while $\Dx^0f(x)=f(x)$.
\end{thm}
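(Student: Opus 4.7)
The plan is to obtain Theorem~\ref{th:E-wD} by composing Theorem~\ref{th:E-ED} with Theorem~\ref{th:coincidence}, applying the latter to the function $\Dx^r f$. The overall chain of equivalences I would set up is
\begin{displaymath}
\E\le\Cn n^{-2r}\varphi\prn{\tfrac1n}
\;\Longleftrightarrow\;
\Epar n{\Dx^r f}\le\Cn\varphi\prn{\tfrac1n}
\;\Longleftrightarrow\;
\wpar{\Dx^r f,\delta}\le\Cn\varphi(\delta),
\end{displaymath}
where the first equivalence comes from Theorem~\ref{th:E-ED} (which simultaneously yields the existence of the derivative of order $2r$ in the sense of the paper) and the second from Theorem~\ref{th:coincidence}. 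The case $r=0$ is nothing but Theorem~\ref{th:coincidence} itself, so I would dispose of it separately and then focus on $r\ge 1$.

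First I would verify that Theorem~\ref{th:E-ED} is indeed applicable with the parameters given here. The constraints in cases (1) and (2) of the present theorem are strictly stronger than the corresponding constraints in cases (2) and (4) of Theorem~\ref{th:E-ED}, because $\nu_0\le\nu$ and $\mu_0\le\mu$ imply $\alpha<\nu_0+\frac12-\frac1{2p}\le\nu+\frac12-\frac1{2p}$ and similarly for $\beta$; and the lower bound $1-\frac1{2p}<\alpha$ in the present theorem is stronger than $-\frac1{2p}<\alpha$ required there. (The cases $\nu=\mu=-\frac12$ and $\nu>\mu=-\frac12$ of Theorem~\ref{th:E-ED} do not arise because the present theorem assumes $\nu\ge\mu>\frac12$.) Hence Theorem~\ref{th:E-ED}, which itself contains the hypothesis on $\sum j^{-1}\varphi(1/j)$ as a special case of~\eqref{eq:phi-3}, gives the first equivalence, and as a by-product produces the required derivative of order~$2r$ of $f$ and a function $\Dx^r f\in\Lp$.

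Next I would check that the parameters~$\alpha,\beta$ satisfy the hypotheses of Theorem~\ref{th:coincidence}, so that the second equivalence can be applied to $\Dx^r f$. The lower bounds $1-\frac1{2p}<\alpha,\beta$ (respectively $\frac12<\alpha,\beta$ for $p=1$ and $\alpha,\beta\ge1$ for $p=\infty$) coincide in both theorems. For the upper bounds, the crucial observation is that $\nu_0\le\frac52-\frac1{2p}$ and $\mu_0\le\frac52-\frac1{2p}$ imply
\begin{displaymath}
\alpha<\nu_0+\tfrac12-\tfrac1{2p}\le 3-\tfrac1p,
\qquad
\beta<\mu_0+\tfrac12-\tfrac1{2p}\le 3-\tfrac1p,
\end{displaymath}
with the analogous bounds $\alpha,\beta\le2$ for $p=1$ and $\alpha,\beta<3$ for $p=\infty$; the standing conditions~\eqref{eq:phi-3} (with the same $\lambda_0$) and~\eqref{eq:phi-4} are assumed verbatim. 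Thus Theorem~\ref{th:coincidence} applies to $\Dx^r f$ and delivers the second equivalence.

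The only real obstacle in this plan is precisely this parameter bookkeeping: one has to inspect each of the four ranges of $p$ separately and confirm that the truncation by $\nu_0$ and $\mu_0$ is exactly what is needed to match the ceiling $3-\frac1p$ coming from Theorem~\ref{th:coincidence}, while the constraints inherited from Theorem~\ref{th:E-ED} are automatic. Once this is done, the theorem follows at once by chaining the two equivalences, noting that $\Dx^0 f=f$ handles the degenerate case $r=0$.
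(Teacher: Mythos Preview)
Your proposal is correct and follows exactly the route the paper takes: the paper's entire proof is the single sentence ``Theorem~\ref{th:E-wD} is implied by Theorems~\ref{th:coincidence} and~\ref{th:E-ED}.'' You have simply supplied the parameter bookkeeping that the paper leaves implicit, correctly observing that the truncation by $\nu_0,\mu_0\le\frac52-\frac1{2p}$ is precisely what forces $\alpha,\beta$ into the range required by Theorem~\ref{th:coincidence}, and that the hypothesis of Theorem~\ref{th:E-ED} on $\sum j^{-1}\varphi(1/j)$ follows from~\eqref{eq:phi-3} since $\lambda_0\ge0$.
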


Theorem~\ref{th:E-wD} is implied by
Theorems~\ref{th:coincidence} and~\ref{th:E-ED}.

Note that for $\varphi(\delta)=\delta^\lambda$,
$2\lambda_0<\lambda<2$ and $r=0$ Theorem~\ref{th:E-wD}
is proved in~\cite{p-berisha:east-98}.

\bibliographystyle{amsplain}
\bibliography{maths}

\end{document}